\font\smallit=cmti10
\newtheorem{theorem}{Theorem}
\newtheorem{corollary}[theorem]{Corollary}
\newtheorem{lemma}[theorem]{Lemma}
\let\leftmoon\relax
\newcommand{\conj}[1]{\oset{\curvearrowleftright}{#1}}
\newcommand{\conjS}[1]{\oset{\curvearrowleftright}{#1}}
\newcommand{\bs}{\boldsymbol}
\newcommand{\N}{\ensuremath{\mathscr{N}}}
\renewcommand{\P}{\ensuremath{\mathscr{P}}
}\renewcommand{\L}{\mathscr{L}}
\newcommand{\oinf}{\ensuremath{\overline{\infty }}}
\renewcommand{\inf}{\ensuremath{{\bs\infty }}}
\newcommand{\Np}{{\mathbb{Np}}}
\newcommand{\Npi}{{\mathbb{Np}^\infty}}
\newcommand{\Npc}{\mathbb{Np}^{\rm C}}
\newcommand{\Imi}{\mathbb{Im}^\inf}
\renewcommand{\Im}{\mathbb{nim}}
\newcommand{\imm}[1]{S_{#1}}
\newcommand{\prot}[1]{P_{#1}}
\newcommand{\moonform}{\cg{\cg{\inf}{0}\!,0}{\cg{0}{\oinf}\!,0}}
\newcommand{\moonformsimple}{\cg{\inf}{\oinf}}
\newcommand{\mf}[4]{\cg{\cg{\inf}{#1},#2}{\cg{#3}{\oinf},#4}}
\newcommand{\moon}{\scalebox{1.1}{\ensuremath{\leftmoon}}}
\newcommand{\moonsmall}{\scalebox{0.9}{\ensuremath{\leftmoon}}}
\newcommand{\mex}{\ensuremath{\mathrm{mex}}}
\newcommand{\SG}{\mathcal{G}}
\newcommand{\GL}{{G^\mathcal{L}}}
\newcommand{\GR}{{G^\mathcal{R}}}
\newcommand{\XL}{X^\mathcal{L}}
\newcommand{\XR}{X^\mathcal{R}}
\newcommand{\cg}[2]{\left\{ #1\!\mid \!#2\right\}}
\theoremstyle{definition}
\newtheorem{definition}[theorem]{Definition}
\newtheorem{observation}[theorem]{Observation}
\newtheorem{example}[theorem]{Example}
\newtheorem{axiom}[theorem]{Axiom}
\newtheorem{notation}[theorem]{Notation}
\DeclareMathOperator{\cgfuzzy}{\|}
\newcommand{\oset}[3][0ex]{%
  \mathrel{\mathop{#3}\limits^{
    \vbox to#1{\kern-1.5\ex@
    \hbox{$\scriptstyle#2$}\vss}}}}
\begin{document}
\begin{center}
\uppercase{\bf Impartial games with entailing moves}
\vskip 20pt
{\bf Urban Larsson\footnote{urban031@gmail.com}}\\
{\smallit School of Computing, National University of Singpore, Singapore}\\
{\bf Richard J.~Nowakowski\footnote{r.nowakowski@dal.ca}}\\
{\smallit Department of Mathematics and Statistics, Dalhousie University, Canada}\\
{\bf Carlos P. Santos\footnote{Partially supported by UID/MAT/04721/2019 strategic project; cmfsantos@fc.ul.pt}}\\
{\smallit Center for Functional Analysis, Linear Structures and Applications,\\ University of Lisbon \& ISEL--IPL}\\
\end{center}

\begin{abstract}
Combinatorial Game Theory has also been called `additive game theory', whenever the analysis
involves sums of independent game components. Such {\em disjunctive sums} invoke comparison between games, which allows abstract values to be assigned to them.  However, there are rulesets with {\em entailing  moves} that break the alternating play axiom and/or restrict the other player's options within the disjunctive sum components. These situations are exemplified in the literature by a ruleset such as {\sc nimstring}, a normal play variation of the classical children's game {\sc dots\,\&\,boxes}, and {\sc top~entails}, an elegant ruleset introduced in the classical work Winning Ways, by Berlekamp Conway and Guy. Such rulesets  fall outside the scope of the established normal play theory. Here, we axiomatize normal play via two new terminating games, $\inf$ (Left wins) and $\oinf$ (Right wins), and a more general theory is achieved. We define {\em affine impartial}, which extends classical impartial games, and we analyze their algebra by extending the established Sprague-Grundy theory, with an accompanying minimum excluded rule. Solutions of {\sc nimstring} and {\sc top~entails} are given to illustrate the theory.
\end{abstract}

\section{Introduction}
Combinatorial Game Theory (CGT), as described in \cite{AlberNW2007,BerleCG1982,Con1976,Siegel2013}, considers disjunctive sums of normal play games. In order to evaluate the outcome of a sum of such games, it suffices to analyze the components individually, and then {\em add} the individual values. 

However, some classical impartial rulesets, such as {\sc nimstring} and {\sc top entails} fall slightly outside the usual CGT axioms. In {\sc nimstring}, certain moves require a player to play again, or {\em carry-on}, which is a violation of the alternating play axiom. And in {\sc top entails}, certain moves enforce the next player to play in the same component, which violates the standard definition of a disjunctive sum. Thus, values of individual components is no longer a relevant measure, given the standard CGT axioms. The type of moves mentioned in this paragraph will be gathered under the term  {\em entailing moves}.\footnote{Entailing means ``involve something as a necessary or inevitable part or consequence''.} 

The purpose of this paper is to extend impartial normal play games sufficiently to include games with entailing moves. While accomplishing this, we expand the classical Sprague-Grundy theory to fit this extension.

We will rebuild the normal play axioms by using so-called {\em terminating games}, or {\em infinities}, $\inf$ and $\oinf$. Here we focus on the impartial setting, and the general comprehensive theory for partizan games will appear in \cite{LNS}.\footnote{There are partizan rulesets, in the literature and in recreational play, with similar entailing and {\em terminating} moves. Probably the most prominent ones are the game of {\sc chess}, and a version of \textsc{go} called \textsc{atari go}.}  These theories are called \textit{affine impartial} and \textit{affine normal play} respectively. 

Although we consider only impartial games in this paper, we will keep the players distinguished as Left and Right. In particular, Left wins if either player plays to $\inf$, in any component, and Right wins in case of play to $\oinf$. Note that the normal play zero is restored by defining $0 = \cg{\oinf}{\inf}$, a first player losing position. 

It is well-known that, in classical Combinatorial Game Theory, the impartial values are nimbers. We will prove that there is exactly one more value modulo affine impartial, a game $\moon$, called the \emph{moon}. This value was anticipated in the classical work Winning Ways, by Berlekamp, Conway and Guy. In \cite{BerleCG1982}, volume 2, page 398, one can read ``A loony move is one that loses for a player, no matter what other components are.''. 
Before developing the theory, we illustrate how the infinities are used in the motivating rulesets, \textsc{nimstring} (\cite{Berle2000,BerleCG1982}) and \textsc{top~entails} (\cite{BerleCG1982}, volume 2).

Let us first briefly mention the organization of the paper. To facilitate the development of the new impartial theory, Section \ref{sec:formsorder} considers the basic properties of unrestricted affine normal play, aiming for a game comparison result, Theorem~\ref{th:compConway}.
The affine impartial theory is developed in Section \ref{sec:SG}. The main result is Theorem \ref{gsg} which shows that values in this extension are the nimbers plus one more value. Theorem~\ref{gmr} gives an algorithm to find the value of a given position, and notably, if there are no infinities in the options, then the nimbers are obtained by the usual mex-rule. We finish off with two case studies. In Section \ref{sec:nimstring},  we compute the value of an interesting \textsc{nimstring} position, anticipated in Section~\ref{sec:nimstringexample}. In Section \ref{sec:topentail}, we compute the values for {\sc top entail} heaps of sizes $1$ through $12$, and Theorem~\ref{th:slick} provides theoretical justification for computing {\sc top entails} values.

\subsection{The ruleset {\sc nimstring}}\label{sec:nimstringexample}
In \textsc{nimstring}, a player draws a line between two horizontally or vertically adjacent points, in a finite grid, not already joined by a line.
If a player completes a $1\times 1$ square, then they must draw another line, and if they cannot do so, they lose. 

Figure~\ref{fig:*2} shows an example position, where no square can be completed in the next move. Later, through the new theory, we will see that the position $H$ equals $*2$ modulo affine impartial. 
\begin{figure}[htbp]
\caption{A \textsc{nimstring} position, $H$.}
\label{fig:*2}
\begin{center}
\includegraphics{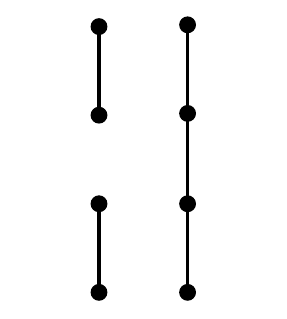}
\end{center}
\end{figure}

In Figure~\ref{fig:1}, we show a position, $G$, with two options, one of which is an entailing move. Namely, if the the top bar is drawn, the next player continues, but if the middle bar is drawn, then the current player has to carry-on.

\begin{figure}[ht]
\caption{A \textsc{nimstring} position, $G$, with its two options, a `double-box' and an entailing carry-on position.}
\label{fig:1}
\begin{center}
\includegraphics{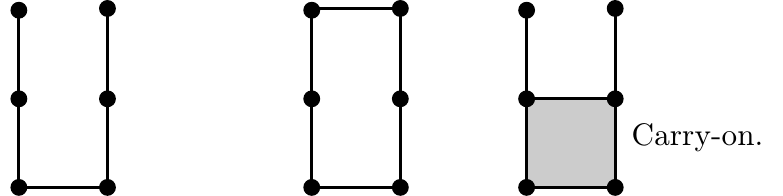}
\end{center}
\end{figure}

When we develop the theory, we will see that the position $G$, to the left in Figure~\ref{fig:1}, is the abstract game 
\begin{align}\label{earlymoonform}
\moonform. 
\end{align}
The option 0 is obtained by drawing the top bar. The intuition for this is as follows: the player who moves inside the `double box' loses, if $G$ is played in isolation, because they have to play again. 

If a player draws the middle bar in $G$, then they have to carry-on, and this is represented by the abstract option $\cg{\inf}{0}$, if Left moved. There is an infinite urgency in this game: Right has to play here, or lose.  And so, the effect is the desired: Left plays again, and  alternating play is restored. Hence disjunctive sum play is also restored, within the affine impartial convention. Moreover, the Right option in this threat should be 0, because Left loses by playing this option if $G$ is played alone. If the sum is $G+H$, with $H$ as in Figure~\ref{fig:*2}, then the next player wins, by playing this entailing middle bar in $G$.

\subsection{The ruleset {\sc top entails}}
\textsc{Top~entails} is played on heaps of tokens. A player may either remove the top token from exactly one heap, or split a heap into two non-empty heaps. If the top token is removed from a heap, then the next move (in alternating play) must be played on the same heap.

A heap with one token, say $H$, is a first player win, in any situation. Namely, a move in $H$ forces the opponent to play in the same heap, where no move remains. Note that the abstract game $H=\cg{\inf}{\oinf}$ settles this behaviour. The player who moves first in $H$ wins independently of existence of other components. The point we wish to make here is that this abstract representation settles the problem of independecny of a heap of size one with other disjunctive sum components.

Consider $G$, in Figure \ref{fig:ex7}, a pile of size 3. There are two options, as depicted in Figures~\ref{fig:ex8} and \ref{fig:ex9}. 
\begin{figure}[htbp]
\caption{A pile $G$ of \textsc{top entails}, of size 3.}
\label{fig:ex7}
\begin{center}
\includegraphics{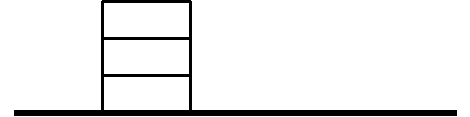}
\end{center}
\end{figure}

The option in Figure~\ref{fig:ex8} splits $G$ into two piles and the next player's options are unrestricted. By the terminating effect of playing in a heap of size one, this composite game should be equal to the game $H=\cg{\inf}{\oinf}$.

\begin{figure}[htbp]
\caption{The game $G$ is split into two components.}
\label{fig:ex8}
\begin{center}
\includegraphics{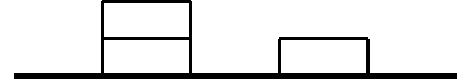}
\end{center}
\end{figure}

The option in Figure~\ref{fig:ex9} is an entailing move, and the next player must continue in this component, even if other moves are available. Therefore, the game form of the entailing option in Figure~\ref{fig:ex9} is $$\cg{\inf}{\bf 1+\bf 1,\bf 1_{\text{entail}}},$$ if Left just moved, and where {\bf 1} denotes a heap of size one. The terminating threat forces Right to play here, instead of possibly using other options.
\begin{figure}[htbp]
\caption{An entailing option.}
\label{fig:ex9}
\begin{center}
\includegraphics{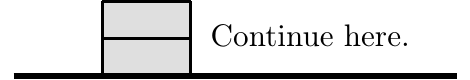}
\end{center}
\end{figure}

Intuitively, either way of responding reduces to a game of the form $H=\cg{\inf}{\oinf}$. In conclusion, the heap of size three should be equal to the game $H$, and disjunctive sum play has been restored. All this intuition will be rigorously justified  in the coming complete theory for affine impartial play.

It turns out that affine impartial games require only a small extension to the Sprague-Grundy theory. Namely, the game in \eqref{earlymoonform}, obtained from the {\sc nimstring} position in Figure~\ref{fig:1}, equals the game $H=\cg{\oinf}{\inf}$ in the previous paragraph, modulo affine impartial, and later we will devote the value `$\moon$' to the equivalence class of such games.

\section{Affine literal forms and order}\label{sec:formsorder}

This section aims at Theorem~\ref{th:invcomparison}, a comparison theorem for affine normal play that suffices for the purpose of this paper. We begin by defining the fundamental concepts for affine normal play, and we wait with the restriction to affine impartial until the next section.

In classical Combinatorial Game Theory, the normal play forms, $\mathbb{Np}$, are recursively constructed from the empty set. The form $\{\varnothing\,|\,\varnothing\}=0$ is the only form of day zero and the only form without options. The forms $\{0\,|\,\varnothing\}=1$ , $\{\varnothing\,|\,0\}=-1$, $\{0\,|\,0\}=*$, are born on day 1, and so on.

The forms of {\em affine normal play}, denoted $\mathbb{Np}^\infty$, are recursively constructed from the games $\infty$ (infinity) and $\overline{\infty}$ (minus infinity) \cite{LNS}. The forms $\infty$ and $\overline{\infty}$ are the only forms without options. The forms $\cg{\oinf}{\inf}=0,\cg{\inf}{\oinf}=\pm\inf, \cg{\inf}{\inf}$ and $\cg{\oinf}{\oinf}$
are born on day zero. And so on.

The order of $\mathbb{Np}^\infty$ is defined in the standard way. Consider the four perfect play outcome classes $\mathscr{L}$ (Left wins), $\mathscr{N}$ (Next player wins), $\mathscr{P}$ (Previous player wins), and $\mathscr{R}$ (Right wins). From Left's perspective, the first outcome is the best (she wins, regardless of playing first or second) and the fourth is the worst (she loses, regardless of whether playing first or second). On the other hand, regarding $\mathscr {N}$ and $\mathscr{P}$, the victory depends on playing first or second, so these outcomes are not comparable. These considerations explain the partial order in an `outcome diamond':

\begin{center}
\includegraphics[scale=1.2]{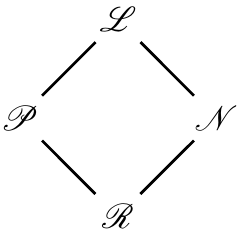}\\
\end{center}

We write $G\in \mathscr{L}$, or equivalently $o(G)=\mathscr L$, if the outcome of $G\in \mathbb{Np}^\infty$ is Left wins, and so on. The evaluation of games in $\mathbb{Np}^\infty$ is based on the following axiomatic list:

\begin{axiom}[Absorbing Nature of Infinities]\label{axiom}
The infinities satisfy
\begin{enumerate}
  \item $\infty\in\mathscr{L}$;
    \item $\overline{\infty}\in\mathscr{R}$;
          \item For all $X\in \mathbb{Np}^\infty\setminus \{\oinf\}$, $\infty+X=\infty$;
           \item For all $X\in \mathbb{Np}^\infty\setminus \{\inf\}$, $\oinf+X=\oinf$;
  \item `$\infty+\overline{\infty}$' is not defined.
\end{enumerate}
\end{axiom}
Addition of games is defined as usual, apart from items 3 and 4.
The fifth item is natural in terms of perfect play, since if \inf\ appears, then \oinf\ cannot appear and vice versa.

The definitions of equality and partial order of games are based on the outcome diamond.
%

\begin{definition}[Order and Equality of Games]\label{order}
Let $G, H\in\mathbb{Np}^\infty$. Then, $G\succcurlyeq H$ if, for every form $X\in \mathbb{Np}^\infty\setminus\{\infty,\overline{\infty}\}$, $o(G+X)\geqslant o(H+X)$. Moreover $G=H$ if $G\succcurlyeq H$ and $H\succcurlyeq G$.
\end{definition}
Note that the exclusion of the infinities does not diminish the generality of the definition, but is necessary due to Axiom~5.
As usual, we have the following observations.
 If $G=H$ then replacing $H$ by $G$ or $G$ by $H$ do not hurt the players under any circumstances.
Similarly, if $G\succcurlyeq H$ then replacing $H$ by $G$ does not hurt Left, and replacing $G$ by $H$ does not hurt Right.

\begin{theorem}\label{inflarge}
Let $G\in\mathbb{Np}^\infty$. Then $\infty\succcurlyeq G$ and $G\succcurlyeq\overline{\infty}$.
\end{theorem}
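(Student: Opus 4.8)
The plan is to unwind Definition~\ref{order} directly. To show $\inf \su G$ I must verify that for every form $X \in \Npi \setminus \{\inf, \oinf\}$ we have $o(\inf + X) \ge o(G + X)$; dually, for $G \su \oinf$ I need $o(G+X) \ge o(\oinf+X)$ for all such $X$. By Axiom~\ref{axiom}, item~3, since $X \ne \oinf$, the sum $\inf + X$ is literally the game $\inf$, and by item~1, $o(\inf) = \L$, which is the top of the outcome diamond. Hence $o(\inf+X) = \L \ge o(G+X)$ trivially, for any $G$ and any admissible $X$. Symmetrically, $\oinf + X = \oinf$ by item~4 (as $X \ne \inf$), and $o(\oinf) = \R$ by item~2, the bottom of the diamond, so $o(G+X) \ge \R = o(\oinf + X)$. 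This gives both inequalities.

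The only subtlety — and the one place I need to be a little careful rather than purely formal — is the implicit well-definedness of the sums $\inf + X$ and $G + X$ appearing in the definition. The restriction $X \notin \{\inf,\oinf\}$ in Definition~\ref{order} is exactly what is needed so that $\inf + X$ is defined (it rules out the forbidden combination $\inf + \oinf$ of Axiom~\ref{axiom}, item~5); but I should also note that $G + X$ could in principle be undefined if $G$ itself ``is'' $\oinf$ while $X$ involves $\inf$, or vice versa. Here, though, $G \in \Npi$ is an arbitrary fixed form and the quantifier is over $X$, so the statement should be read as: the inequality $o(G+X) \ge o(\oinf + X)$ holds for every $X$ for which both sides are defined. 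With that reading the argument above is complete; if $G = \inf$ the claim $\inf \su \inf$ is immediate and $\inf \su \oinf$ is vacuous in the problematic direction, and similarly if $G = \oinf$.

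I do not anticipate a genuine obstacle: this theorem is essentially the assertion that $\inf$ and $\oinf$ are the maximum and minimum of the order, and it falls out immediately from the absorbing axioms together with the outcome-diamond order, with no induction on birthday required. The ``hard part,'' such as it is, is simply to state cleanly that the exclusion of $\{\inf,\oinf\}$ in the definition of $\su$ dovetails with Axiom~\ref{axiom}, item~5, so that every sum in sight is legitimate; once that bookkeeping is acknowledged, the proof is two lines invoking items 1--4 of the axiom.
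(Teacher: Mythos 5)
Your proof is correct and follows essentially the same route as the paper: apply Axiom items 3 and 4 to collapse $\inf+X$ to $\inf$ (resp.\ $\oinf+X$ to $\oinf$), then use items 1 and 2 together with the fact that $\L$ and $\R$ are the top and bottom of the outcome diamond. The extra remarks on well-definedness of the sums are sensible bookkeeping but do not change the argument.
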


\begin{proof}
If $X\in \mathbb{Np}^\infty\setminus\{\infty,\overline{\infty}\}$ then, by Axiom~3, $\infty+X=\infty$. Hence, by Axiom~1, $o(\infty+X)=o(\infty)=\mathscr{L}$. Therefore, for every $X\in \mathbb{Np}^\infty\setminus\{\infty,\overline{\infty}\}$,  we have $o(\infty+X)\geqslant o(G+X)$, and so $\infty\succcurlyeq G$. Proving that $G\succcurlyeq\overline{\infty}$ is analogous.
\end{proof}

The concept of a {\em check} is  fundamental to $\mathbb{Np}^\infty$. Indeed, this is an alternative, and perhaps more explicit, at least for those Chess playing readers, term for an {\em entailing move}, as seen in the Introduction.

\begin{definition}[Check Games] Consider $G\in \mathbb{Np}^\infty$. If $\infty\in \GL$ ($\overline{\infty}\in \GR$) then $G$ is a \emph{Left-check} (\emph{Right-check}). If $G$ is a Left-check or a Right-check then $G$ is a \emph{check}. Denote by $G^{\oset{\rightarrow}{L}}$ ($G^{\oset{\leftarrow}{R}}$) a Left (Right) option of $G$ that is a Left-check (Right-check).
\end{definition}

Of course, all checks are asymmetric, apart from the `trivial check', $\cg{\inf}{\oinf}$. A player would not use this check, because the opponent `check mates' by defending.

\begin{definition}[Quiet Games]\label{def:quiet}
Let $G\in\mathbb{Np}^\infty$. If $G\neq \inf$ ($G\neq \oinf$) and $G$ is not a Left-check (Right-check) then $G$ is \emph{Left-quiet} (\emph{Right-quiet}). If $G$ is Left-quiet and Right-quiet then $G$ is \emph{quiet}.
\end{definition}


\begin{definition}[Conway Forms and Games]\label{def:conw}
A game $G\in \mathbb{Np}^\infty$ is a \emph{Conway form} if $G\not\in\{\infty,\overline{\infty}\}$, and $G$ has no checks as followers. Let $\mathbb{Np}^{\rm C}\subseteq\mathbb{Np}^\infty$ denote the substructure of Conway forms. A game $G\in \mathbb{Np}^\infty$ is a \emph{Conway game} if it equals a Conway form.
\end{definition}

\begin{example}
The game $G=\cg{\cg{\oinf}{\infty}}{\cg{\oinf}{\infty}} =\cg{0}{0} =*$ is a Conway form (no checks as followers). 
The game $G'=\cg{\cg{\infty}{*}}{\cg{*}{\oinf}}$ is not a Conway form because there are checks as followers. However, later, we will see that $G'=G$. Therefore, $G'$ is a Conway game.
\end{example}

In general, when we say form, we mean the literal form, and when we say game, we usually mean (any member in) the full equivalence class of games. When we write $G\in \mathbb{Np}^\infty$, we usually refer to the literal form, but the context may decide.

Some classical theorems are still available in $\mathbb{Np}^\infty$.

\begin{theorem}[Fundamental Theorem of Affine Normal Play]\label{thm:ftnp}
If $G\in \mathbb{Np}^\infty$ then $G\succcurlyeq 0$ if and only if $G\in \mathscr{L}\cup\mathscr{P}$.
\end{theorem}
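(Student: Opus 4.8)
The plan is to mimic the classical proof of the Fundamental Theorem of Normal Play, adapting it to the affine setting by carefully tracking the role of the infinities and of check moves. Recall the statement: for $G\in\mathbb{Np}^\infty$, we have $G\succcurlyeq 0$ if and only if $G\in\mathscr{L}\cup\mathscr{P}$. Both directions will be proved by induction on the formal birthday of $G$ (the rank in the recursive construction from $\inf$ and $\oinf$), invoking the induction hypothesis on options.

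First I would dispose of the easy direction. Suppose $G\succcurlyeq 0$. By Definition~\ref{order}, taking $X=0=\cg{\oinf}{\inf}$ (which lies in $\mathbb{Np}^\infty\setminus\{\inf,\oinf\}$), we get $o(G)=o(G+0)\ge o(0+0)=o(0)=\mathscr{P}$. Reading off the outcome diamond, $o(G)\ge\mathscr{P}$ forces $o(G)\in\{\mathscr{L},\mathscr{P}\}$, i.e. $G\in\mathscr{L}\cup\mathscr{P}$. (One should check $0+0=0$ in $\mathbb{Np}^\infty$; this is immediate since $0$ is not an infinity, so ordinary addition applies, and the usual argument that $0$ is a two-sided identity on non-infinite games goes through — or one can simply note $o(0)=\mathscr P$ directly from perfect play in the empty game $\cg{\oinf}{\inf}$.)

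The substantive direction is: if $G\in\mathscr{L}\cup\mathscr{P}$ then $G\succcurlyeq 0$, i.e. $o(G+X)\ge o(0+X)=o(X)$ for every $X\in\mathbb{Np}^\infty\setminus\{\inf,\oinf\}$. Fix such an $X$. If $o(X)=\mathscr{R}$ or $o(X)=\mathscr{P}$ there is nothing to prove (every outcome is $\ge\mathscr R$, and... no — one must handle $\mathscr P$: we must show $o(G+X)\ge\mathscr P$, i.e. $o(G+X)\in\{\mathscr L,\mathscr P\}$, i.e. Left can win $G+X$ moving second). So the real content is: when $X\in\mathscr{N}\cup\mathscr{L}$ (Left wins $X$ moving second) and $G\in\mathscr{L}\cup\mathscr{P}$ (Left wins $G$ moving second), then Left wins $G+X$ moving second; and when $X\in\mathscr L$ (Left also wins moving first) and $G\in\mathscr L\cup\mathscr P$, then... actually the clean uniform claim is: \emph{if $G\in\mathscr L\cup\mathscr P$ and Left wins $X$ moving second, then Left wins $G+X$ moving second}. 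I would prove this by a strategy-stealing/response argument with induction on the total birthday of $G+X$. Right moves first in $G+X$. Case (a): Right moves in $X$, to $X'$. Then either $X'$ is not a check, in which case Left, who wins $X$ second, has a response in $X$ (or the position is already such that Left wins) and we recurse; the new $X$-component stays non-infinite, $G$ unchanged, birthday drops. Case (b): Right moves in $G$ to some $G'$. Since $G\in\mathscr L\cup\mathscr P$, Left has a winning reply $G''$ (a Left option of $G'$) with $G''\in\mathscr L\cup\mathscr P$, and we recurse on $G''+X$. The delicate points are (i) what happens when Right's move is a \emph{Right-check}, i.e. creates $\oinf$ as an option for the next player — but by Axiom~4 such a position, when summed, \emph{is} $\oinf$, so we must ensure this never arises from a position Left should be winning; and dually (ii) Left never wants to play to a Left-check out of turn. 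I expect these infinity/entailment bookkeeping cases to be the main obstacle: one has to argue that if Right, moving in $G+X$ from a position Left wins (moving second), could move to a Right-check, then already $o$ of that option would be $\mathscr R$, contradicting $G\in\mathscr L\cup\mathscr P$ together with Left winning $X$ second — essentially because a Right-check summand collapses the whole sum to $\oinf$ by Axiom~4, and one pushes this back to the components via the induction hypothesis and Theorem~\ref{inflarge} ($G\succcurlyeq\oinf$ always, so a move in $G$ to something $=\oinf$ would already have made $o(G)=\mathscr R$).

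To keep the induction honest I would actually prove the two directions together as a single statement quantified over birthday, so that the options invoked (which have smaller birthday) already satisfy the biconditional, and I would state and use the dual fact $G\pr 0\iff G\in\mathscr R\cup\mathscr P$ in tandem (its proof is the mirror image, or it follows by the evident order-reversing symmetry $G\mapsto -G$ of $\mathbb{Np}^\infty$ that swaps $\inf\leftrightarrow\oinf$). The only genuinely new ingredient beyond the classical proof is the systematic use of Axioms 1–4 and Theorem~\ref{inflarge} to rule out, at every step, that a player on the back foot can escape into an infinity or a check; everything else is the standard disjunctive-sum response strategy.
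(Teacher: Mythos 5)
Your overall plan is the paper's: the converse direction is proved by the local-mimicry strategy (respond in whichever component Right just moved in, using Left's winning strategy for that component alone), and the forward direction is the one-line outcome-diamond observation with $X=0$. The paper's proof is exactly this, stated without explicit induction. However, two points in your write-up are genuinely off. First, your outcome-class bookkeeping is wrong: $X\in\mathscr{N}\cup\mathscr{L}$ is where Left wins $X$ moving \emph{first}, not second, and your ``clean uniform claim'' (Left wins $X$ second $\Rightarrow$ Left wins $G+X$ second) says nothing about the case $o(X)=\mathscr{N}$, nor about the first-player half of $o(X)=\mathscr{L}$; as stated, your argument does not prove $o(G+X)\geqslant o(X)$ for those $X$. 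The missing companion statement --- Left, moving first, plays her winning move $G+X^L$ and thereafter responds locally --- is precisely the first case of the paper's proof and must be said (it reduces to the second-player claim applied to $X^L$, or to $\inf$ absorption if $X^L=\inf$).

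Second, your treatment of checks contains a false step: a Right-check is a position with $\oinf$ among its \emph{Right options}; it is not itself $\oinf$, so Axiom~4 does not ``collapse the sum to $\oinf$'' when such a position appears as a summand. The threat is only executed on Right's next turn. The correct (and much shorter) observation is that no special bookkeeping for checks is needed at all: if Right checks in a component, Left's local winning strategy for that component \emph{played alone} already faces the identical threat there, so her prescribed local reply necessarily neutralizes it or wins outright; and if Left's local strategy ever calls for moving a component to $\inf$, the whole sum becomes $\inf$ by Axiom~3 and she wins. This is why the paper can dispose of the entailing structure in one sentence rather than the case analysis you anticipate.
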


\begin{proof} Assume that $G\succcurlyeq 0$. We have $0 \in \mathscr{P}$, and so, by order of outcomes, $G\in \mathscr{L}\cup\mathscr{P}$.

Suppose now that $G\in \mathscr{L}\cup\mathscr{P}$. If $G=\infty$, by Theorem \ref{inflarge}, $G\succcurlyeq 0$; hence, assume $G\neq\infty$. Let $X\in \mathbb{Np}^\infty\setminus\{\infty,\overline{\infty}\}$.

If, playing first, Left wins $X$ with the option $X^L$, then she also wins $G+X$ with the option $G+X^L$. Essentially, she mimics the strategy used when $X$ is played alone, answering locally when Right plays in $G$. Due to the assumption $G\in \mathscr{L}\cup\mathscr{P}$, this is a winning strategy for Left in $G+X$.

If Left, playing second, wins $X$. Then, on $G+X$, she can respond to each of Right's moves locally, with a winning move on the same component, because $G\in \mathscr{L}\cup\mathscr{P}$. Thus Left can win $G+X$ playing second.

Therefore,  $o(G+X)\geqslant o(X)$ and so, $G\succcurlyeq 0$.
\end{proof}

\begin{corollary}[Order-Outcome  Bijection]\label{thm:ftnp2}
If $G\in \mathbb{Np}^\infty$ then
\begin{itemize}
  \item $G\succ0$ if and only if $G\in \mathscr{L}$;
  \item $G=0$ if and only if $G\in \mathscr{P}$;
  \item $G\cgfuzzy 0$ if and only if $G\in \mathscr{N}$;
  \item $G\prec0$ if and only if $G\in \mathscr{R}$.
\end{itemize}
\end{corollary}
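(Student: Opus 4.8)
The plan is to derive Corollary~\ref{thm:ftnp2} directly from Theorem~\ref{thm:ftnp} together with Theorem~\ref{inflarge} and the basic structure of the outcome diamond, treating each of the four bullets in turn. The unifying idea is that $G \succcurlyeq 0$ and $0 \succcurlyeq G$ are exactly the two ``halves'' supplied by Theorem~\ref{thm:ftnp} (the second half applied to $-G$, or rather to the reversal of roles of Left and Right, since here games are impartial and $0$ is self-inverse in the relevant sense), and each strict or fuzzy relation is obtained by combining one half with the negation of the other.

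First I would record the mirror-image of Theorem~\ref{thm:ftnp}: $G \preccurlyeq 0$ if and only if $G \in \mathscr{R} \cup \mathscr{P}$. This follows by the same argument as Theorem~\ref{thm:ftnp} with the roles of Left and Right interchanged (the outcome diamond and Axioms~1--4 are symmetric under this swap together with $\inf \leftrightarrow \oinf$), so I would just cite it as ``by symmetry''. With both halves in hand, the four cases are immediate: if $G \in \mathscr{P}$ then $G \succcurlyeq 0$ and $G \preccurlyeq 0$, hence $G = 0$; conversely $G = 0$ forces $o(G) = o(G + 0) = o(0) = \mathscr{P}$ by taking $X = 0$ in Definition~\ref{order} (legal since $0 \notin \{\inf,\oinf\}$). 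For the first bullet, $G \in \mathscr{L}$ gives $G \succcurlyeq 0$ by Theorem~\ref{thm:ftnp}, and $G \notin \mathscr{P}$ gives $G \neq 0$, so $G \succ 0$; conversely $G \succ 0$ means $G \succcurlyeq 0$ and $G \neq 0$, so $G \in (\mathscr{L} \cup \mathscr{P}) \setminus \{\text{things equal to }0\}$, and since $G \in \mathscr{P}$ would imply $G = 0$ by the second bullet, we conclude $G \in \mathscr{L}$. The fourth bullet is the mirror of the first. For the third bullet, $G \cgfuzzy 0$ means $G \not\succcurlyeq 0$ and $G \not\preccurlyeq 0$; by Theorem~\ref{thm:ftnp} and its mirror this says $G \notin \mathscr{L} \cup \mathscr{P}$ and $G \notin \mathscr{R} \cup \mathscr{P}$, and since the outcome diamond has only the four classes, this leaves exactly $G \in \mathscr{N}$; the converse runs the same equivalences backwards.

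The one genuine subtlety, and the step I would be most careful about, is the direction ``$G = 0 \Rightarrow G \in \mathscr{P}$'': one must be sure that $X = 0$ is an admissible test game in Definition~\ref{order}, i.e. that $0 \notin \{\inf, \oinf\}$ and that $G + 0 = G$ holds as outcomes — both are clear since $0 = \cg{\oinf}{\inf}$ is an ordinary form and $0$ is an additive identity on $\mathbb{Np}^\infty \setminus \{\inf, \oinf\}$ by the usual argument (Axioms~3 and 4 do not interfere). A second point worth stating explicitly is that the four order-relations $\succ 0$, $= 0$, $\cgfuzzy 0$, $\prec 0$ are genuinely mutually exclusive and exhaustive for $G \in \mathbb{Np}^\infty$: this is just the trichotomy of the partial order $\succcurlyeq$ restricted to comparison with the single element $0$, and it matches the four-way partition of the outcome diamond, so the bijection claimed in the corollary's name is the assertion that these two four-element partitions coincide. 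I would close by noting that no new induction is needed — everything reduces to Theorem~\ref{thm:ftnp}, its Left/Right mirror, and bookkeeping on the diamond.
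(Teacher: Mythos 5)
Your proposal is correct and follows essentially the same route as the paper's proof: state the Left--Right mirror of Theorem~\ref{thm:ftnp} (``$G\preccurlyeq 0$ iff $G\in\mathscr{R}\cup\mathscr{P}$'') by symmetry, then combine the two halves case by case, with the $\mathscr{N}$ case falling out by elimination. The only cosmetic difference is that for ``$G=0\Rightarrow G\in\mathscr{P}$'' the paper uses the intersection $(\mathscr{L}\cup\mathscr{P})\cap(\mathscr{R}\cup\mathscr{P})=\mathscr{P}$ rather than testing with $X=0$, but both are valid and equally short.
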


\begin{proof} The statement of Theorem~\ref{thm:ftnp} can equivalently be ``$G\preccurlyeq 0$ if and only if $G\in\mathscr{R}\cup\mathscr{P}$'', so we can use that fact too.

Suppose that $G\succ0$. By Theorem \ref{thm:ftnp}, $ G\in\mathscr{L}\cup\mathscr{P}$. But, we cannot have $G\in\mathscr{P}$, for otherwise $G\in\mathscr{R}\cup\mathscr{P}$ and $G\preccurlyeq 0$. Therefore, $G\in\mathscr{L}$. Conversely, suppose that $G\in\mathscr{L}$. By Theorem \ref{thm:ftnp}, we have $G\succcurlyeq 0$. But, we cannot have $G=0$, for otherwise $G\preccurlyeq 0$, and  $G\in\mathscr{R}\cup\mathscr{P}$. Hence, $G\succ 0$. Thus, the first equivalence holds.

The proof of the fourth equivalence is analogous.

For the second equivalence, if $G=0$, then $G\succcurlyeq 0 \;\wedge\; G\preccurlyeq 0$. So, $G\in(\mathscr{L}\cup\mathscr{P})\cap (\mathscr{R}\cup\mathscr{P})=\mathscr{P}$.

The third equivalence is a consequence of eliminating all other possibilities.
\end{proof}

It is known that $\mathbb{Np}$ is a group. By Corollary \ref{thm:ftnp2} we may deduce that $\mathbb{Np}^\infty$ is only a monoid. Namely, if $G=\{\infty\,|\,0\}$ then, for any $X\in \mathbb{Np}^\infty\setminus\{\infty,\overline{\infty}\}$, $G+X\in \mathscr{L}\cup\mathscr{N}$ (playing first, Left wins). Hence, for all $X$, $G+X\neq 0$ and $G$ is non-invertible. Thus, in general, the comparison of $G$ with $H$ cannot be done by playing the game `$G-H$', because, sometimes, `$-H$' does not exist.

However, the following theorem shows that not everything is lost.

The \emph{conjugate} of a given game switches roles of the players.
\begin{definition}[Conjugate]\label{def:conjugate}
The conjugate of $G\in \Npi$ is
\[ \conj{G}\, =
\begin{cases}
\oinf, &\mbox{if $G=\infty$}\\
\infty, &\mbox{if $G=\oinf$}\\
\cg{\conjS\GR\, }{\; \conjS\GL }, &\mbox{otherwise},
\end{cases}
\]
where $\conjS{\GL}$ denotes the set of literal forms
$\conjS{G^L} $, for $G^L\in \GL$, and similarly for $\GR$.

\end{definition}

\begin{theorem}\label{th:conjConway}
If $G\in\mathbb{Np}^\infty$ is a Conway game, then $G$ is invertible and $-G=\,\conj{G}$.
\end{theorem}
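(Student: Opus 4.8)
The goal is to show that a Conway game $G$ is invertible in $\mathbb{Np}^\infty$ with inverse $\conj G$, i.e.\ $G + \conj G = 0$. By Corollary~\ref{thm:ftnp2}, this is equivalent to showing $G + \conj G \in \mathscr P$. The natural approach is the classical CGT argument: play the sum $G+\conj G$ and exhibit an explicit second-player winning strategy for whichever player is ``second'', namely the \emph{Tweedledum--Tweedledee} (mirroring) strategy — whenever the opponent moves in one component, respond with the conjugate move in the other. The induction is on the birthday (formal rank) of $G$, and we will want to prove the statement simultaneously for all Conway followers of $G$, since $\conj{\cdot}$ acts componentwise on options.

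The key steps, in order: First, set up the induction and note that the Conway property is inherited by followers and is preserved under conjugation — in particular $\conj G$ is again a Conway form (it has no checks as followers because $G$ has none and conjugation swaps Left-checks with Right-checks). Second, verify the base: if $G\not\in\{\inf,\oinf\}$ has no options other than possibly... actually, since $G$ is a Conway form, $G\neq\inf$ and $G\neq\oinf$, and the minimal case is $G$ with no options at all is impossible (the only optionless forms are the infinities), so the smallest Conway form is $0=\cg{\oinf}{\inf}$ — but wait, its options \emph{are} infinities. Here is the subtlety: a Conway form may have $\inf$ or $\oinf$ as \emph{options}, just not as proper followers via a sequence of moves, and crucially it must not be a check. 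So I should first check carefully that the mirroring strategy never lets the opponent reach $\inf$ or $\oinf$. Third, run the mirroring argument: suppose the opponent (say Right) moves in $G+\conj G$. A move to an infinity would make $G$ a check (if Right played $\oinf$ in $G$, then $\oinf\in G^{\mathcal R}$, so $G$ is a Right-check, contradiction; similarly a move to $\inf$ in $\conj G$ would make $\conj G$ a Left-check, hence $G$ a Right-check, contradiction). So Right moves to $G^R + \conj G$ or $G + \conj{G^L}$ with $G^R$, $\conj{G^L}$ Conway forms. Left responds with the mirror: $\conj{G^R} + \conj{G^R}$... more precisely, to $G^R + \conj G$ Left replies $G^R + \conj{G^R}$ (legal since $\conj{G^R}\in\conj{\GR}=(\conj G)^{\mathcal L}$), landing in $G^R + \conj{G^R}$, which is $0$ by induction. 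Symmetrically for the other component and for Left being the opponent. Since the game terminates (affine normal play forms are well-founded) and the player to move in a $0$-position always has a response keeping a $0$-position, the mover loses; hence $G+\conj G\in\mathscr P$.

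The main obstacle is precisely the interaction with the infinities: I must argue that throughout the mirroring the players are never \emph{forced} into, nor \emph{tempted} into, a position involving $\inf$ or $\oinf$, and that Axiom~5 (``$\inf+\oinf$ undefined'') never bites. The Conway hypothesis is exactly what rules this out — no checks as followers means neither component can suddenly offer a terminating move — but this needs to be spelled out at each step of the strategy, and it is the reason the theorem is stated for Conway games rather than all of $\mathbb{Np}^\infty$ (indeed the earlier remark shows $\cg{\inf}{0}$ is non-invertible). A secondary point requiring care: Definition~\ref{def:conjugate} is a statement about literal forms, so the induction should be phrased at the level of forms, and only at the end do we pass to the equivalence class, using that $G$ being a Conway \emph{game} means it equals some Conway \emph{form} to which the form-level result applies.
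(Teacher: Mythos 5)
Your proposal is correct and takes essentially the same route as the paper: an induction in which the responder mirrors each move by the conjugate move in the other component, with the ``no checks as followers'' hypothesis ruling out a first-player move to their favourable infinity, Corollary~\ref{thm:ftnp2} converting $G+\conj{G}\in\mathscr{P}$ into $G+\conj{G}=0$, and a final reduction of a Conway game to an equivalent Conway form. (Only a cosmetic slip: the dangerous Right move in $\conj{G}$ is to $\oinf$, which corresponds to $\inf\in\GL$, i.e.\ to $G$ being a Left-check; the contradiction you draw is nonetheless the intended one.)
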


\begin{proof}
Suppose first that $G$ is a Conway form. If $G=0$ then $\conj{G}=0$, and the\linebreak theorem holds. Otherwise, let us verify that $G+\conj{G}$ is a $\mathscr{P}$-position. If Left, playing first, chooses $G^L+\conj{G}$, because this game is not $\infty$ ($G$ is not a check), Right can answer with $G^L+\conj{G^L}$ and, by induction, because $G^L$ is a Conway form with no checks as followers, that option is equal to zero. Because of that, by Corollary~\ref{thm:ftnp2}, that option is a $\mathscr{P}$-position, and Right wins. Analogous arguments work for the other options of the first player, and so, $G+\conj{G}$ is a $\mathscr{P}$-position. Again, by Corollary~\ref{thm:ftnp2}, $G+\conj{G}=0$.

Suppose now that $G$ is not a Conway form. Because it is a Conway game, by definition, it is equal to some $G'\in\Npc$. The first paragraph proved that $G'+\conj{G'}=0$. Also, by symmetry, $\conj{G}$ is equal to $\conj{G'}$. Therefore, $G'+\conj{G'}=0$ implies $G+\conj{G}=0$.
\end{proof}

\begin{lemma}\label{th:compinvertible}
Let $G,H\in\mathbb{Np}^\infty$, and let $J$ be an invertible form of $\mathbb{Np}^\infty$. Then $$G\succcurlyeq H\Leftrightarrow G+J\succcurlyeq H+J.$$
\end{lemma}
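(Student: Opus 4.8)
The plan is to prove the two implications separately, using only the definition of $\succcurlyeq$ (Definition~\ref{order}) together with the invertibility of $J$ and its inverse $-J$, which exists in $\mathbb{Np}^\infty$ since $J$ is invertible. The forward direction should be the easy one: assuming $G\succcurlyeq H$, I want $o(G+J+X)\ge o(H+J+X)$ for every test form $X\in\mathbb{Np}^\infty\setminus\{\infty,\overline{\infty}\}$. The natural move is to apply the hypothesis $G\succcurlyeq H$ with the single test form $X' := J+X$, which gives $o(G+X')\ge o(H+X')$, i.e.\ $o(G+J+X)\ge o(H+J+X)$, as desired. The one subtlety is that $X'=J+X$ need not lie in $\mathbb{Np}^\infty\setminus\{\infty,\overline{\infty}\}$: it could equal $\infty$ or $\overline{\infty}$ (e.g.\ if $J$ or $X$ is an infinity-valued game, or a sum collapses via Axiom~3 or 4). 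So first I would dispose of the degenerate cases. By Axiom~5 we may assume that no summand is literally $\infty$ when another is $\overline{\infty}$. If $J+X=\infty$ then $o(G+J+X)=o(H+J+X)=\mathscr L$ by Axiom~1 and Axiom~3, so the inequality holds trivially; symmetrically if $J+X=\overline{\infty}$. Otherwise $J+X$ is a legitimate test form and the argument above applies.

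For the reverse direction, assume $G+J\succcurlyeq H+J$. Since $J$ is invertible, let $-J$ be its inverse, so $J+(-J)=0$. Now apply the already-proved forward implication to the pair $G+J,\ H+J$ with the invertible form $-J$: from $G+J\succcurlyeq H+J$ we get $(G+J)+(-J)\succcurlyeq (H+J)+(-J)$. It remains to simplify $(G+J)+(-J)=G+(J+(-J))=G+0=G$, using associativity and commutativity of $+$ (valid wherever the sums are defined) together with the fact, available from Corollary~\ref{thm:ftnp2} and Theorem~\ref{thm:ftnp} (or simply from $0$ being the monoid identity on $\mathbb{Np}^\infty$), that adding $0$ does not change a game. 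Hence $G\succcurlyeq H$.

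The main obstacle I expect is bookkeeping around the infinities: the relation $\succcurlyeq$ is defined by quantifying over test forms \emph{excluding} $\infty,\overline{\infty}$, yet in the forward step the effective test form becomes $J+X$, which can slip outside that set, and in the reverse step I must be sure $-J$ is itself a form of $\mathbb{Np}^\infty$ and that $J+(-J)$ genuinely equals $0$ as a game (not merely ``cancels'' formally). The first issue is handled by the case split on whether $J+X\in\{\infty,\overline{\infty}\}$ sketched above; the second is exactly the content of invertibility of $J$, so it is a hypothesis rather than something to be proved. Everything else — associativity, commutativity, and $G+0=G$ — is routine and needs only a sentence. One should also note that the symmetric statement $H\succcurlyeq G\Leftrightarrow H+J\succcurlyeq G+J$ follows by relabelling, which immediately yields the corresponding equality statement $G=H\Leftrightarrow G+J=H+J$ should it be needed downstream.
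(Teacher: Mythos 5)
Your proposal is correct and follows essentially the same route as the paper: the forward direction substitutes the test form $J+X$, and the reverse direction cancels with $-J$ (the paper substitutes $X'=-J+X$ directly, which is the same computation as your ``apply the forward implication with $-J$''). The only cosmetic difference is that the paper dispenses with your case split on $J+X\in\{\infty,\overline{\infty}\}$ by observing that an invertible $J$ cannot be an infinity, so $J+X$ is never literally $\infty$ or $\overline{\infty}$ when $X$ is not.
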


\begin{proof} ($\Rightarrow$) Consider any $X\in \mathbb{Np}^\infty\setminus\{\infty,\overline{\infty}\}$ and let $X'=J+X $. Since $J$ is invertible, $J$ is neither $\infty$ nor $\overline{\infty}$, and so, $X'$ is neither $\infty$ nor $\overline{\infty}$.  Definition of order implies $o(G+X')\geqslant o(H+X')$, that is $o(G+J+X))\geqslant o(H+J+X)$. Thus, the arbitrariness of $X\in \mathbb{Np}^\infty\setminus\{\infty,\overline{\infty}\}$ implies $G+J\succcurlyeq H+J$.\\

\noindent
($\Leftarrow$) Consider any $X\in \mathbb{Np}^\infty\setminus\{\infty,\overline{\infty}\}$ and let $X'=-J+X$ ($J$ is invertible, i.e. $-J $ exists and $J-J=0$). Since $-J$ is invertible, $-J$ is neither $\infty$ nor $\overline{\infty}$, and so, $X'$ is neither $\infty$ nor $\overline{\infty}$. By definition of order, $o(G +J+X')\geqslant o((H+J+X')$, that is $o(G+J-J+X)\geqslant o(H+J-J+X)$. Hence, $o(G+X)\geqslant o(H+X)$, and so, given the arbitrariness of $X\in \mathbb{Np}^\infty\setminus\{\infty,\overline{\infty}\}$, $G \succcurlyeq H$. \end{proof}

\begin{theorem}\label{th:invcomparison}
Let $G$ be any form of $\mathbb{Np}^\infty$ and suppose that $H$ is an invertible form of $\mathbb{Np}^\infty$. Then, $$G\succcurlyeq H\Leftrightarrow G-H\in\mathscr{L}\cup\mathscr{P} \text{ and }G=H\Leftrightarrow G-H\in\mathscr{P}.$$
\end{theorem}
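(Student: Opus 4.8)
The plan is to reduce everything to the Fundamental Theorem of Affine Normal Play (Theorem~\ref{thm:ftnp}) and its Corollary~\ref{thm:ftnp2}, using the cancellation property of invertible forms established in Lemma~\ref{th:compinvertible}. Since $H$ is invertible, $-H$ exists and is itself an invertible form (its inverse being $H$), so Lemma~\ref{th:compinvertible} applies with $J=-H$. This is legitimate even when $G\in\{\infty,\overline{\infty}\}$: invertibility of $H$ forces $H,-H\notin\{\infty,\overline{\infty}\}$, so every sum written below avoids the undefined combination $\infty+\overline{\infty}$ of Axiom~\ref{axiom}.

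First I would prove the order equivalence. Applying Lemma~\ref{th:compinvertible} with $J=-H$ gives
\[
G\succcurlyeq H \iff G+(-H)\succcurlyeq H+(-H).
\]
Because $H$ is invertible, $H+(-H)=0$, so the right-hand side reads $G-H\succcurlyeq 0$. By Theorem~\ref{thm:ftnp}, $G-H\succcurlyeq 0$ if and only if $G-H\in\mathscr{L}\cup\mathscr{P}$, which is the first assertion.

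For the equality statement, recall $G=H$ means $G\succcurlyeq H$ and $H\succcurlyeq G$. The first relation is $G-H\succcurlyeq 0$ by the paragraph above. For the second, apply Lemma~\ref{th:compinvertible} with $J=-H$ to $H\succcurlyeq G$, obtaining $H+(-H)\succcurlyeq G+(-H)$, i.e. $0\succcurlyeq G-H$; note that no invertibility of $G$ is needed here. Hence $G=H$ if and only if $G-H\succcurlyeq 0$ and $0\succcurlyeq G-H$, that is, $G-H=0$. By Corollary~\ref{thm:ftnp2}, $G-H=0$ if and only if $G-H\in\mathscr{P}$, completing the proof.

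I do not expect a genuine obstacle; the mathematical content is entirely carried by Lemma~\ref{th:compinvertible} and Theorem~\ref{thm:ftnp}. The only point needing a line of care is the bookkeeping around Axiom~\ref{axiom}: one must observe that passing through the sum $G-H$ never produces the forbidden $\infty+\overline{\infty}$, which is immediate from the hypothesis that $H$ (hence $-H$) is invertible and therefore neither infinity.
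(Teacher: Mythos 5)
Your proof is correct and follows essentially the same route as the paper: apply Lemma~\ref{th:compinvertible} with $J=-H$ to reduce $G\succcurlyeq H$ to $G-H\succcurlyeq 0$, invoke Theorem~\ref{thm:ftnp}, and handle equality via the conjunction $G\succcurlyeq H\wedge H\succcurlyeq G$. Your extra remarks on why the forbidden sum $\infty+\overline{\infty}$ never arises are a welcome but inessential addition to what the paper does.
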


\begin{proof} By Lemma \ref{th:compinvertible}, $G\succcurlyeq H \Leftrightarrow G-H \succcurlyeq H-H$. Therefore, we have $G\succcurlyeq H\Leftrightarrow G-H \succcurlyeq 0$. By Theorem \ref{thm:ftnp}, this is the same as $G\succcurlyeq H \Leftrightarrow G-H \in \mathscr{L} \cup \mathscr{P}$.

Finally, $G=H\Leftrightarrow G-H\in\mathscr{P}$, by $G\succcurlyeq H\wedge H\succcurlyeq G$.
\end{proof}

\begin{theorem}\label{th:compConway}
Let $G$ be any form of $\mathbb{Np}^\infty$ and let $H\in\mathbb{Np}^\infty$ be a Conway game. Then
\begin{itemize}
\item $G\succcurlyeq H$ if and only if $G\,+\conj{H}\,\in\mathscr{L}\cup\mathscr{P}$
\item $G=H$ if and only if $G\,+\conj{H}\,\in\mathscr{P}$.
\end{itemize}
\end{theorem}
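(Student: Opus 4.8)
The plan is to deduce this immediately from the two preceding results: Theorem~\ref{th:conjConway}, which tells us that a Conway game is invertible with inverse equal to its conjugate, and Theorem~\ref{th:invcomparison}, which characterizes $G\succcurlyeq H$ and $G=H$ by the outcome of $G-H$ whenever $H$ is invertible. So the entire content of this theorem is the observation that for a Conway game $H$ the expression $G-H$ may be rewritten as $G+\conj{H}$.

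Concretely, I would first invoke Definition~\ref{def:conw} to fix a Conway form $H'\in\Npc$ with $H=H'$. By Theorem~\ref{th:conjConway}, $H'$ is invertible and $-H'=\conj{H'}$. Applying Theorem~\ref{th:invcomparison} with $G$ and the invertible form $H'$ gives $G\succcurlyeq H'\Leftrightarrow G-H'\in\mathscr{L}\cup\mathscr{P}$ and $G=H'\Leftrightarrow G-H'\in\mathscr{P}$. Substituting $-H'=\conj{H'}$ turns $G-H'$ into $G+\conj{H'}$. It then remains to pass from $H'$ back to $H$: since $H=H'$ we have $G\succcurlyeq H\Leftrightarrow G\succcurlyeq H'$ and $G=H\Leftrightarrow G=H'$ by substitutivity of $=$, and $\conj{H}=\conj{H'}$ because conjugation is compatible with equality (switching the roles of Left and Right reverses the order and hence preserves equality — the same ``by symmetry'' argument already used in the proof of Theorem~\ref{th:conjConway}). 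Combining these gives $G\succcurlyeq H\Leftrightarrow G+\conj{H}\in\mathscr{L}\cup\mathscr{P}$ and $G=H\Leftrightarrow G+\conj{H}\in\mathscr{P}$, as desired. Note also that $G+\conj{H}$ is well defined, since $\conj{H}=\conj{H'}=-H'$ is neither $\infty$ nor $\oinf$, so Axiom~5 is not triggered.

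The only real subtlety — and hence the step I expect to need the most care — is the bookkeeping between \emph{forms} and \emph{games}: Theorem~\ref{th:invcomparison} is stated for an invertible \emph{form} $H$, whereas here $H$ is merely a Conway \emph{game}, i.e. equal to a Conway form but possibly containing checks as followers (as in the example $G'=\cg{\cg{\infty}{*}}{\cg{*}{\oinf}}$). Handling this cleanly requires the small facts that both the order relation and conjugation descend to equivalence classes, so that replacing $H$ by the Conway form $H'$ it equals changes neither side of the claimed equivalences. Once that is granted, nothing further is needed; there is no induction and no new strategy argument, the work having already been done in Theorems~\ref{th:conjConway} and~\ref{th:invcomparison}.
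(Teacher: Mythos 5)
Your proposal is correct and matches the paper's proof, which simply states that the theorem is a direct consequence of Theorems~\ref{th:conjConway} and~\ref{th:invcomparison}. Your extra bookkeeping about passing to a Conway form $H'$ is harmless but already absorbed into the statement of Theorem~\ref{th:conjConway}, which asserts invertibility and $-H=\conj{H}$ for the Conway \emph{game} $H$ itself, so Theorem~\ref{th:invcomparison} applies to $H$ directly.
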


\begin{proof} These are direct consequences of Theorems \ref{th:conjConway} and \ref{th:invcomparison}.
\end{proof}

In a follow up paper \cite{LNS}, where we study the full game space $\mathbb{Np}^\infty$, we provide a solution of the general case of $G\succcurlyeq H$.
\section{Affine impartial theory}\label{sec:SG}

In order to propose an extension of the Sprague-Grundy theory, we first define the concept of an affine impartial game.\footnote{In terms of ruleset: here `affine impartial' is an abbreviation of affine normal play impartial, in the sense that if the player to move cannot complete their move they lose.} Of course, rulesets like {\sc nimstring} should be impartial.

\begin{definition}[Symmetric Game]\label{def:symmetric}
Consider a form $G\in\Npi$. Then $G$ is \emph{symmetric} if $G\not\in\{\infty,\overline{\infty}\}$ and $\GR=\; \conjS{\GL}$.
\end{definition}

\begin{definition}[Affine Impartial]\label{def:impartial}
A form $G\in\mathbb{Np}^\infty$ is \emph{affine impartial} if it is symmetric and all quiet followers of $G$ are symmetric. The subset of affine impartial games is  $\Imi\subset\Npi$.
\end{definition}

Of course, a non-quiet game either has no option, or is a check, and so (unless a trivial check) is by definition asymmetric. But this is the only exception of symmetry in the world of affine impartial impartial games. It is easy to check that $\Imi$ satisfies the standard closure properties of combinatorial games, i.e. closure of taking options, addition, and conjugates.

The following result must hold for any class of games that claims to be ``impartial''.

\begin{theorem}[Affine Impartial Outcomes]\label{symmetric}
If $G$ is a symmetric form, then $G\in\mathscr{N}\cup\mathscr{P}$.
\end{theorem}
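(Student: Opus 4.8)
The plan is to argue by strategy-stealing, exploiting the symmetry $\GR = \conjS{\GL}$ together with the absorbing nature of the infinities. I want to show that the player who does \emph{not} move first in $G$ has a winning strategy, i.e. $G \in \mathscr{P} \cup \mathscr{N}$ is really the statement ``$G \notin \mathscr{L}$ and $G \notin \mathscr{R}$''. By the outcome diamond it suffices to prove $G \notin \mathscr{L}$ and, by conjugation symmetry, $G \notin \mathscr{R}$ will follow (note $\conj G$ is symmetric whenever $G$ is, and $o(\conj G)$ is the mirror of $o(G)$, so the two claims are equivalent under the player swap). So the crux is: if Right moves first in $G$, Right does not lose; equivalently, playing first, Right can always at least draw with Left — but since these are finite games with only $\mathscr L$-or-$\mathscr R$ terminal outcomes via $\inf,\oinf$, ``not losing as the first player'' means exactly ``winning as the first player or $G\in\mathscr P$''. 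Let me instead phrase it directly as: \emph{the second player wins or the first player wins}, which is vacuous; the real content is \emph{the first player does not have both-ways power}, so I prove $G \in \mathscr L \Rightarrow$ contradiction.

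Here is the step order. First, induct on the birthday (formation depth) of $G$. The base cases are forms with no options other than through infinities: if $G \in \{\inf,\oinf\}$ it is excluded by the definition of symmetric, so the minimal symmetric forms have $\GL,\GR$ nonempty (or both empty, giving $G=\{\varnothing\mid\varnothing\}$-type — but in $\Npi$ the only optionless forms are $\inf,\oinf$, so actually every symmetric form has an option, with $\GR=\conjS\GL$ forcing $\GL\neq\varnothing$). Second, suppose for contradiction $G \in \mathscr L$, so Left wins moving first and moving second. Consider Right moving first: Right picks some $G^R \in \GR = \conjS{\GL}$, say $G^R = \conj{G^L}$ for a corresponding $G^L \in \GL$. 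Now I want to say Left, to move in $G^R = \conj{G^L}$, is in the conjugate position of $G^L$ with Left to move, which is the mirror of ``$G^L$ with Right to move''. Third — and this is where the infinities need care — Left moving \emph{second} in $G$ wins, so for \emph{every} $G^R$, including this one, the resulting position $G^R$ is a Left win with Left to move, i.e. $G^R \in \mathscr L \cup \mathscr N$. But $G^R = \conj{G^L}$, so $o(\conj{G^L}) \in \{\mathscr L,\mathscr N\}$ means $o(G^L) \in \{\mathscr R, \mathscr N\}$, hence $G^L \notin \mathscr L \cup \mathscr P$, i.e. $G^L$ with Right to move is a Right win. Since this holds for all $G^L \in \GL$, Left moving first in $G$ loses — contradicting $G \in \mathscr L$. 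A symmetric argument rules out $G \in \mathscr R$. The induction hypothesis is invoked only implicitly here through the clean outcome bookkeeping; the genuinely load-bearing facts are that $o(\conj X)$ is the player-reversal of $o(X)$ for all $X \in \Npi$ (a routine structural induction, possibly already implicit in the paper's conjugate discussion), and Axioms 1–2 pinning $o(\inf),o(\oinf)$ so that no option can ``escape'' the outcome accounting.

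The main obstacle I anticipate is handling \emph{check} options correctly, since these are exactly the asymmetric intrusions allowed by Definition~\ref{def:impartial}. If some $G^L$ is itself a Left-check, i.e. $\inf \in (G^L)^{\mathcal L}$, then $G^L$ with Right to move: Right is forced to respond, and $\inf$ being available to Right's opponent doesn't immediately kill Right — I must check that the conjugate correspondence still tracks outcomes through such positions. The resolution is that the identity $o(\conj X) = $ reversal of $o(X)$ is proved for \emph{all} $X \in \Npi$ without any symmetry hypothesis, so it applies verbatim to check options; the only place symmetry of $G$ enters is the bijection $\GR \leftrightarrow \GL$ via conjugation. The second subtlety is the possibility that $G$ has a Left option equal to $\inf$ (so $G$ is a Left-check) — but then $G$ is not symmetric unless $G = \{\inf\mid\oinf\}$, the trivial check, which is in $\mathscr N$ and satisfies the conclusion directly; for all other symmetric $G$, no option is $\inf$ or $\oinf$, so $G^L + (\text{empty context})$ never collapses under Axiom 3/4, and the plain finite-game outcome recursion applies. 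I would dispatch that trivial check as an explicit preliminary case, then run the conjugation/strategy-stealing argument on the remaining symmetric forms.
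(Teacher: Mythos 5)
Your argument is correct and is essentially the paper's own proof: a strategy-stealing argument that pairs each $G^L$ with $\conj{G^L}\in\GR$ and uses the fact that conjugation reverses outcomes (you run it through the second-player half of $\mathscr{L}$, the paper through the first-player half, but these are mirror images of the same contradiction). One small inaccuracy in your final paragraph: a symmetric form can be a check and still have further options (e.g.\ $\cg{\inf,0}{\oinf,0}$), not only the trivial check $\cg{\inf}{\oinf}$ --- but any such form is immediately in $\mathscr{N}$, so your case handling still goes through.
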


\begin{proof} This proof uses a strategy-stealing argument. Suppose that $G\in\mathscr{L}$. Then Left wins $G$ playing first with some option $G^L$. Hence, by symmetry, Right wins $G$ playing first with $\conjS{G^L}$. That contradicts $G\in\mathscr{L}$. A similar argument holds against $G\in \mathscr{R}$.
\end{proof}

We want to restrict our analysis to $\Imi$. Therefore, we define equality modulo $\Imi$.

\begin{definition}[Impartial Equality]\label{equalityI}
Consider forms $G, H\in \Imi$. Then, $G=_{\Imi}H$ if, for every form $X\in\Imi$, $o(G+X)=o(H+X)$.
\end{definition}


\begin{observation}
Of course, $G=H$ in $\mathbb{Np}^\infty$ implies $G=_{\Imi}H$. The opposite direction is not true. We can have $G=_{\Imi}H$ and $G\neq H$ in $\mathbb{Np}^\infty$, if the there is no distinguishing game in $\Imi$. A simple example is $G =\moonform $ and $H =\mf{*}{*}{*}{*} $. As we will see, these games are indistinguishable modulo $\Imi$. However, the game $X=\cg{0}{-1}$ distinguishes them in $\Npi$; playing first, Left wins $G+X$,  but loses $H+X$.
\end{observation}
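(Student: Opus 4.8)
The plan is to establish three separate claims: first, that $G = H$ in $\mathbb{Np}^\infty$ implies $G =_{\Imi} H$; second, that the specific $G = \moonform$ and $H = \mf{*}{*}{*}{*}$ satisfy $G =_{\Imi} H$ while failing $G = H$ in $\Npi$; and third, that the witnessing game is $X = \cg{0}{-1}$. The first claim is essentially immediate from the definitions: if $G = H$ in $\mathbb{Np}^\infty$, then $o(G+X) = o(H+X)$ for \emph{every} $X \in \Npi \setminus \{\infty, \overline\infty\}$, and in particular for every $X \in \Imi$ (note $\Imi$ is disjoint from $\{\infty,\overline\infty\}$ since symmetric forms exclude the infinities by Definition~\ref{def:symmetric}); hence $G =_{\Imi} H$.

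For the separation, I would first check that $X = \cg{0}{-1}$ genuinely distinguishes $G$ and $H$ in $\Npi$, i.e. that $G + X$ and $H + X$ lie in different outcome classes. Here I expect $G$ to be a Left-check-flavoured position: in $G = \moonform$, Left's move to the Left-check $\cg{\inf}{0}$ is a threat that, combined with $X^L = 0$, lets Left win playing first, whereas from $H$ with its options $\cg{\inf}{*}$ and the like, the presence of $*$ rather than $0$ inside the entailing threats changes the parity of the follow-up and leaves Right able to defend. The computation is a finite outcome-diamond case analysis using Axiom~\ref{axiom} (the absorbing infinities), Theorem~\ref{inflarge}, and the Order-Outcome Bijection (Corollary~\ref{thm:ftnp2}); I would carry it out by expanding $G+X$ and $H+X$ two plies deep and reading off outcomes. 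Since $X = \cg{0}{-1} \notin \Imi$ (it is not symmetric), this separation does not contradict $G =_{\Imi} H$.

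The genuine content — and the main obstacle — is verifying $G =_{\Imi} H$, i.e. that \emph{no} symmetric affine-impartial game $Y$ distinguishes them. This is not provable purely from the definitions available in the excerpt; it relies on the classification of affine impartial values established later (Theorem~\ref{gsg}: the only values are nimbers and $\moon$) together with the fact that both $G$ and $H$ have value $\moon$. So the honest plan is to defer: I would state that $G =_{\Imi} H$ because both forms reduce, modulo $\Imi$, to $H' = \cg{\oinf}{\inf}$ whose equivalence class is named $\moon$ — a fact proved in Section~\ref{sec:SG} via the mex-type algorithm of Theorem~\ref{gmr} — and that for the present Observation it suffices to exhibit the distinguishing game $X$ in $\Npi$ and invoke the forward implication. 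In other words, the Observation's role is illustrative: the easy direction and the $\Npi$-separation are proved here directly, and the claim "$G =_{\Imi} H$" is a forward reference to the structural theory, not something to be ground out in place.
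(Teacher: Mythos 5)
Your decomposition matches what the paper intends for this (unproved) observation: the easy direction is definitional (and you correctly note $\Imi$ excludes the infinities), the claim $G=_{\Imi}H$ is a forward reference to Theorems~\ref{gsg} and~\ref{gmr} (both forms have $S\cup P=\Im$, hence both equal $\moon$), and the separation is the routine finite check you describe, which does go through: in $G+X$ Left checks with $\cg{\inf}{0}$, Right must defend to $0+X$, and Left ends the game with $X\to 0$, whereas in $H+X$ the check leaves Left to move in $*+X$ (a loss), and the quiet options $*+X$ and $H+0$ also lose for Left. One small slip to fix: the common value modulo $\Imi$ is the moon $\cg{\inf}{\oinf}$, not $\cg{\oinf}{\inf}$, which is the zero game.
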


It is easy to verify if a form in $\Imi$ equals a nimber.

\begin{theorem}[Nimbers]\label{nimbers}
Let $G\in\Imi$. Then, $G=_{\Imi}*n$ if and only if $G+*n\in\P$.
\end{theorem}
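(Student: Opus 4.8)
The plan is to reduce this statement about equality in $\Imi$ to a statement about a single outcome, by exploiting that nimbers are invertible (indeed self-conjugate) Conway games. First I would note that $*n$ is an ordinary impartial Conway game with $\conj{*n}=*n$ and $*n+*n=0$ in $\Npi$; hence $*n$ is invertible in the sense of the previous section. The forward direction is immediate: if $G=_{\Imi}*n$, then taking $X=*n\in\Imi$ in Definition~\ref{equalityI} gives $o(G+*n)=o(*n+*n)=o(0)=\mathscr{P}$, so $G+*n\in\P$ by the outcome diamond.

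For the converse, suppose $G+*n\in\P$. The key idea is to transfer this single-outcome fact into the universally-quantified statement defining $=_{\Imi}$, using the comparison machinery. Since $*n$ is a Conway game and $\conj{*n}=*n$, Theorem~\ref{th:compConway} applied with $H=*n$ tells us that $G+*n\in\mathscr{P}$ is equivalent to $G=*n$ in the \emph{full} order of $\Npi$ (i.e., $G\succcurlyeq *n$ and $*n\succcurlyeq G$). But $G=*n$ in $\Npi$ is a stronger statement than $G=_{\Imi}*n$ — by the Observation, full equality implies impartial equality. So the chain is: $G+*n\in\P \Rightarrow G=*n$ in $\Npi \Rightarrow G=_{\Imi}*n$. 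I would write this out explicitly, invoking Theorem~\ref{th:compConway} for the middle implication and the trivial direction of the Observation for the last.

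Actually, a cleaner and more self-contained route avoids appealing to full $\Npi$-equality: for any $X\in\Imi$, I want $o(G+X)=o(*n+X)$. Add $*n$ to both sides conceptually — consider the game $G+*n+X$. Since $*n+*n=0$ and $0$ is an identity, $o(*n+X)=o(*n+*n+(*n+X))=o((G+*n)+(*n+X))$ would follow if I knew $G+*n$ behaves like $0$ against games of the form $*n+X$; and indeed $G+*n\in\P$ means $G+*n=0$ by Corollary~\ref{thm:ftnp2}, so $G+*n$ is the neutral element and $o(G+*n+Y)=o(Y)$ for every admissible $Y$, in particular $Y=*n+X\in\Imi$. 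Wait — Corollary~\ref{thm:ftnp2} gives $G+*n\in\P \Leftrightarrow G+*n=0$ where $=$ is equality in $\Npi$, which already requires that $*n$ be invertible to even make $G+*n$ a sensible comparison; so this route still rests on invertibility of $*n$. Either way the substantive input is the same.

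The main obstacle is making sure the invertibility hypothesis of Theorems~\ref{th:invcomparison} and~\ref{th:compConway} genuinely applies: one must confirm that $*n\in\Npc$ (it has no checks as followers, being an ordinary nimber built from $0=\cg{\oinf}{\inf}$ via the usual $*n=\cg{0,*1,\dots,*(n-1)}{0,*1,\dots,*(n-1)}$, none of whose options is $\inf$ or $\oinf$), so Theorem~\ref{th:conjConway} yields $-*n=\conj{*n}=*n$, and hence Theorem~\ref{th:compConway} is available. Once that bookkeeping is in place, the proof is a two-line application of Theorem~\ref{th:compConway} together with the easy direction of the Observation (full equality implies $\Imi$-equality), plus the one-line forward direction obtained by plugging $X=*n$ into the definition of $=_{\Imi}$.
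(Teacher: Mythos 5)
Your proposal is correct and follows essentially the same route as the paper: the substantive direction ($G+*n\in\mathscr{P}\Rightarrow G=_{\Imi}*n$) goes through Theorem~\ref{th:compConway} with $\conj{*n}=*n$, exactly as in the paper, and the other direction just evaluates the definition of $=_{\Imi}$ at $X=*n$ using $*n+*n\in\mathscr{P}$ (the paper phrases this via Theorem~\ref{symmetric} plus a contradiction, but the content is identical). Your extra bookkeeping confirming that $*n$ is a Conway form, hence invertible with $-*n=\conj{*n}$, is a worthwhile explicit check that the paper leaves implicit.
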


\begin{proof}
Suppose that $G+*n\in\mathscr{P}$. By Theorem~\ref{th:compConway}, $G=*n$ modulo $\mathbb{Np}^\infty$, and so $G=_{\Imi}*n$.

Suppose now that $G=_{\Imi}*n$. By Theorem \ref{symmetric}, $G+*n\in\mathscr{N}\cup\mathscr{P}$, since impartiality is closed under addition. If $G+*n\in\mathscr{N}$, then $G+*n\in\mathscr{N}$ and $*n+*n\in\mathscr{P}$, contradicting $G=_{\Imi}*n$. Hence, $G+*n\in\mathscr{P}$.
\end{proof}

\begin{notation}
Let $\Im\subseteq\Imi$ denote the subset of affine impartial games that equal nimbers.
\end{notation}

It is well-known that, in classical Combinatorial Game Theory, the impartial values are nimbers. We will prove that there is exactly one more value modulo $\Imi$, a game $\moon$, called \emph{moon}. In \cite{BerleCG1982}, volume 2, page 398, one can read ``A loony move is one that loses for a player, no matter what other components are.''. The following general definition is motivated by that idea.

\begin{definition}[Loony Game]
A game $G\in \Npi$ is \emph{loony} if, for all quiet $X\in \Npi\cap(\N\cup \P)$, $G+X\in \mathscr{N}$.
\end{definition}

 Thus, in our interpretation, a `loony move' exposes a loony game.

There are no loony games in $\mathbb{Np}$. Suppose that $G\in \Np\cap(\mathscr{P}\cup\mathscr{L}\cup\mathscr{R})$ is a loony game. Of course, $G+0\in\mathscr{P}\cup\mathscr{L}\cup\mathscr{R}$ and that is a contradiction. Suppose that $G\in \Np\cap\N$ is a loony game. In that case, if $n$ is large enough, $G+\cg{n}{0}\in\L$, and that is a contradiction, since $\cg{n}{0}\in\N$ is quiet. 

There are loony games in $\mathbb{Np}^\infty$ . The obvious one is $\pm\infty=\cg{\inf}{\oinf}$, but we can also have impartial quiet loony moves. Consider $G=\moonform$ and a quiet $X\in \mathbb{Np}^\infty$ such that $X\in\P\cup \N$. If $X\in \P$, the first player wins moving to $X$. If $X\in \N$, the first player wins moving to $\{\infty\,|\,0\}+X$ (Left) or to $\{0\,|\,\overline{\infty}\}+X$ (Right).

\begin{notation}
The {\em moon} is the game form $\moon =\moonformsimple$. 
\end{notation}

When a player moves to $\moon+X$, for any $X\in\mathscr{N}\cup \mathscr{P}$, he ``goes to the moon'' and loses.

\begin{theorem}[Loony Uniqueness]\label{loo}
All loony games are equal modulo $\Imi$.
\end{theorem}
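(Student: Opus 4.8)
The plan is to show that any loony game $G \in \Imi$ satisfies $G =_{\Imi} \moon$, by proving the two inequalities of indistinguishability directly from the definitions. Since $\moon$ is itself loony (this was essentially checked in the paragraph preceding the statement, with $G = \moonform =_{\Imi} \moon$), it suffices to show that \emph{any} two loony games $G, H \in \Imi$ satisfy $o(G + X) = o(H + X)$ for every $X \in \Imi$. The key observation is the dichotomy provided by Theorem~\ref{symmetric}: every $X \in \Imi$ has $o(X) \in \{\mathscr N, \mathscr P\}$, and moreover every quiet follower of $X$ is symmetric, hence also in $\mathscr N \cup \mathscr P$.

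First I would dispose of the case where $X$ is quiet: then $X \in \Npi \cap (\N \cup \P)$ and $X$ is quiet, so by the definition of loony, $G + X \in \mathscr N$ and likewise $H + X \in \mathscr N$, giving $o(G+X) = o(H+X) = \mathscr N$ immediately. The substantive case is when $X$ is \emph{not} quiet, i.e. $X$ is a check (the only non-quiet symmetric-or-trivial option). Since $X \in \Imi$ is symmetric, if $\infty \in X^{\mathcal L}$ then $\oinf \in X^{\mathcal R}$, so $X$ is simultaneously a Left-check and a Right-check — $X$ is a "trivial check" in the sense of the remark after the Check Games definition. Here I would argue that in $G + X$, the player to move can immediately move $X$ to $\infty$ (winning for Left) or to $\oinf$ (winning for Right), so the first player wins regardless of $G$: $G + X \in \mathscr N$, and the same for $H$. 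Thus in every case $o(G + X) = o(H + X)$, so $G =_{\Imi} H$, and in particular $G =_{\Imi} \moon$.

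The main obstacle I anticipate is the non-quiet case: one must be careful that a symmetric check in $\Imi$ really does behave like the trivial check $\cg{\inf}{\oinf}$ for the purpose of outcome computation in a sum. The point is that symmetry forces $\infty \in X^{\mathcal L} \iff \oinf \in X^{\mathcal R}$, so a check $X$ in $\Imi$ offers \emph{both} players a terminating move that wins for them; hence the first player in $G+X$ simply takes the terminating move of $X$ in their favour. One should also double-check that $X \neq \infty, \oinf$ themselves (excluded since $X$ symmetric) and that $G$ being loony already forces $G \notin \{\infty, \oinf\}$ is not needed — loony games may include $\pm\infty$, but $\pm\infty$ is handled by Axiom~3/4 making $\pm\infty + X$ equal to $\infty$ or $\oinf$ which is not in $\mathscr N$; so in fact one should restrict attention to loony $G$ that are genuine elements of $\Imi$ (not $\infty$ or $\oinf$, which are not symmetric hence not in $\Imi$ anyway). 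Assembling these observations yields the uniqueness claim.
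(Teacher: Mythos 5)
Your proposal is correct and follows essentially the same route as the paper: split on whether the test game $X\in\Imi$ is quiet (then $X\in\mathscr N\cup\mathscr P$ by symmetry and the loony definition gives $G+X\in\mathscr N$) or a check (then symmetry puts $\infty\in X^{\mathcal L}$ and $\oinf\in X^{\mathcal R}$, so the first player wins by terminating), concluding $o(G+X)=\mathscr N$ in all cases. The only blemish is the momentary conflation of the form $\pm\infty=\cg{\inf}{\oinf}$ with the atoms $\infty,\oinf$ in your final aside, which you then correct and which does not affect the argument.
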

\begin{proof} Consider $G$ and $G'$, two loony games. We know that all quiet $X\in\Imi$ belong to $\mathscr{N}\cup \mathscr{P}$. By  definition of a loony game, we have $G+X\in \mathscr{N}$ and $G'+X\in \mathscr{N}$. On the other hand, if $X\in\Imi$ is not quiet then $\infty\in \XL$ and $\overline{\infty}\in \XR$, and hence $G+X\in \N$ and $G'+X\in \N$. In all cases, $o(G+X)=o(G'+X)=\N$ and the theorem holds.
\end{proof}
\begin{observation} Two loony games may be different modulo $\Npi$, but equal modulo $\Imi$. The games $\moonform$ and $\mf{2}{0}{-2}{0}$  are loony. These games are different modulo $\mathbb{Np}^\infty$. Left, playing first, loses $\moonform-1$ and wins $\mf{2}{0}{-2}{0}-1$. However, as will follow by theory developed here, one cannot distinguish these two games modulo $\Imi$.
\end{observation}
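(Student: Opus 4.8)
The plan is to establish the Observation in three steps: (i) both $G_1:=\moonform$ and $G_2:=\mf{2}{0}{-2}{0}$ are loony; (ii) they are separated in $\Npi$ by the witness $X=-1$; and (iii) being loony, they coincide modulo $\Imi$ — but this last point is exactly Theorem~\ref{loo} (Loony Uniqueness), which is proved for arbitrary loony games regardless of membership in $\Imi$. So only (i) and (ii) require work, and the bulk of it is (i) for $G_2$ (looniness of $G_1$ was already indicated in the discussion preceding the definition of $\moon$).

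For looniness of $G_2$, fix a quiet $X\in\Npi\cap(\N\cup\P)$ and show $G_2+X\in\mathscr N$. First note $G_2+X$ is well defined: $X$ quiet forces $X\neq\inf,\oinf$. If $X\in\P$, either player opens with $G_2\to 0$, reaching $0+X=X$ with the opponent to move, and wins since $X\in\mathscr P$. If $X\in\N$, Left opens with the Left-check move $G_2\to\cg{\inf}{2}$, reaching $\cg{\inf}{2}+X$ with Right to move (symmetrically, Right opens with $G_2\to\cg{-2}{\oinf}$), and it remains to see Right loses there. If Right moves inside $X$ to $X^R$, then $X^R\neq\oinf$ because $X$ is not a Right-check, so Left answers $\cg{\inf}{2}\to\inf$ and, by the absorbing Axiom, $\inf+X^R=\inf\in\mathscr L$; Right's only other option is $\cg{\inf}{2}\to 2$, reaching $2+X$ with Left to move. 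Here Left wins: since $X\in\N$, Left has a Left option $X^L$ with $X^L\in\mathscr L\cup\mathscr P$, i.e.\ $X^L\succcurlyeq 0$ by Theorem~\ref{thm:ftnp}; moreover $X^L\neq\inf$ ($X$ is not a Left-check) and $X^L\neq\oinf$ (since $\oinf\prec 0$), so $2+X^L$ is well defined, and Lemma~\ref{th:compinvertible} with the invertible summand $2$ gives $2+X^L\succcurlyeq 2\succ 0$, hence $2+X^L\in\mathscr L$ by Corollary~\ref{thm:ftnp2}, so Right, to move, loses. Thus $G_2+X\in\mathscr N$ in all cases, $G_2$ is loony, and Theorem~\ref{loo} yields $G_1=_{\Imi}G_2$.

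For the $\Npi$-separation, play $G_1-1$ and $G_2-1$ with Left moving first. In $G_1-1=\moonform-1$ the summand $-1$ offers Left no move, and both of Left's moves in $\moonform$ lose: $\moonform\to 0$ leaves $-1\in\mathscr R$ for Right, while $\moonform\to\cg{\inf}{0}$ leaves $\cg{\inf}{0}-1$, to which Right replies $\cg{\inf}{0}\to 0$ reaching $-1$ with Left to move; so Left loses $G_1-1$. In $G_2-1$ Left moves to the check $\cg{\inf}{2}-1$; Right must neutralize by $\cg{\inf}{2}\to 2$ (else Left plays $\inf$ and wins), reaching $2-1=1\in\mathscr L$ with Left to move, so Left wins $G_2-1$. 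Hence $o(G_1-1)\in\mathscr P\cup\mathscr R$ while $o(G_2-1)\in\mathscr N\cup\mathscr L$; as $-1\in\Npi\setminus\{\inf,\oinf\}$, this shows $G_1\neq G_2$ in $\Npi$, completing the Observation.

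The step I expect to be the main obstacle is the verification that $2+X$ is a Left win (playing first) for every quiet $X\in\N$: one must route through a Left option $X^L\succcurlyeq 0$ of $X$ and combine the monotonicity of addition by the invertible summand $2$ (Lemma~\ref{th:compinvertible}) with the order--outcome bijection (Corollary~\ref{thm:ftnp2}), all while checking that no summand occurring along the way is $\inf$ or $\oinf$ — which is precisely where the quietness of $X$ is used. (An alternative route for this step is to observe $\cg{\inf}{2}\succcurlyeq\cg{\inf}{0}$ and reuse the looniness of $\moonform$ directly, but the argument above is self-contained.)
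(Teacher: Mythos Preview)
Your proposal is correct. The paper does not actually supply a proof for this Observation; it states the claims and points to Theorem~\ref{loo} for the $\Imi$-indistinguishability, while the looniness of $G_1=\moonform$ is the short paragraph appearing just before the notation~$\moon$. You have filled in the missing pieces --- the looniness of $G_2$ and the explicit separation by $X=-1$ --- and your argument for $G_2$ follows the same template the paper used for $G_1$, with the additional step of showing that Left wins $2+X$ playing first when $X$ is quiet and in $\mathscr N$.

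Two minor remarks. First, that additional step can be shortened: since $2\in\mathscr L$ gives $2\succcurlyeq 0$ by Theorem~\ref{thm:ftnp}, Definition~\ref{order} applied with $Y=X$ yields $o(2+X)\geqslant o(X)=\mathscr N$ directly, so Left wins $2+X$ playing first without routing through a specific option $X^L$ or invoking Lemma~\ref{th:compinvertible}. Second, your parenthetical that Theorem~\ref{loo} ``is proved for arbitrary loony games regardless of membership in $\Imi$'' is not idle: the form $G_2$ is \emph{not} in $\Imi$, because $2$ is a quiet follower of $G_2$ that is not symmetric. So the conclusion ``one cannot distinguish these two games modulo $\Imi$'' must be read exactly as ``$o(G_1+X)=o(G_2+X)$ for every $X\in\Imi$'', which is precisely what the proof of Theorem~\ref{loo} establishes for any pair of loony games.
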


In order to prove an affine impartial minimum excluded rule, we separate the options into two classes. 

\begin{definition}[Immediate Nimbers]\label{def:imm}
Let $G\in\Imi$. The set of $G$-\emph{immediate nimbers}, denoted $\imm{G}$ is the set $S_G=\GL\cap\Im$.
\end{definition}
Not that, by symmetry, $S_G=\GR\cap \Im$, and note that $S_{\moonsmall}=\varnothing$.
\begin{definition}[Protected Nimbers]\label{def:prot}
Conisder a game form $G\in\Imi$. The set of $G$-\emph{protected nimbers} $\prot{G}$ is
\begin{enumerate}
  \item $\prot{G}=\Im$, if $\infty\in \GL$;
  \item $\prot{G}=\{*n : G^{\oset{\rightarrow}{L}}+*n\in\L, G^{\oset{\rightarrow}{L}}\in\GL\}$, otherwise.
\end{enumerate}
\end{definition}
The second item says: if $\infty\not\in \GL$ then $*n\in P_G$ if there is a check $G^{\oset{\rightarrow}{L}}=\{\infty\,|\,G^{L\mathcal{R}}\}\in \GL$ such that Right, playing first, loses $G^{\oset{\rightarrow}{L}}+*n$. That is, playing first, Left is protected against those nimbers in a disjunctive sum.

Similar to Definition~\ref{def:imm}, we could have defined $\prot{G}$ with respect to Right options, to obtain the same set. 

Note that $P_{\moonsmall}=\Im$. This statement holds for the literal form $\moon = \pm\infty$. However, one can show that by using instead the form $\moon=\moonform$, as in \eqref{earlymoonform}, then $P_{\moonsmall}=\Im\setminus \{0\}$. The output of ``protected'' is sensitive to which form we choose. 

When the underlying game form is understood, we simply refer to the immediate and protected nimbers, respectively.

\begin{example}
Let $G\in\Imi$ be such that the Left options are $0$, $*2$, and $\{\infty\,|\,\{*\,|\,\overline{\infty}\},0\}$. Of course, $S_G=\{0,*2\}$. On the other hand, playing first, Left can use the check to win $G+*$. Because of that, $P_G=\{*\}$. An important observation is that, although Left is protected against the nimber $*$, Left cannot force a Left move to $*$ in $G$. But if Right moves to 0, Left wins $G+*$ anyway
\end{example}
%

Sometimes, Right can manoeuvre Left's eventual play to a nimber, or worse, via a sequence of `check upon check'.

\begin{definition}[Manoeuvrable Form]\label{def:maneu}
A quiet form $G\in\Imi$ is \emph{manoeuvrable} if after each Left move that is neither a nimber nor $\oinf$, Right can force, with checks, a Left move to a nimber or a move by either player to $\oinf$.
\end{definition}

\begin{example}
The form $G=\{*2,\{\infty\,|\,\{0,*4\,|\,\overline{\infty}\},0\}\,|\,*2,\{0,\{\infty\,|\,0,*4\}|\,\overline{\infty}\}\}$ is manoeuvrable. If Left avoids the immediate nimber $*2$, by checking, then Right can still force Left to move to one of the nimbers $0$ or $*4$.
\end{example}

\begin{lemma}\label{ff}
If $G\in\Imi$ is manoeuvrable, then $\prot{G}$ is finite.
\end{lemma}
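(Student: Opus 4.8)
The goal is to show that if $G\in\Imi$ is manoeuvrable, then the set $\prot{G}$ of protected nimbers is finite. Since $G$ is quiet (manoeuvrability is only defined for quiet forms), case~1 of Definition~\ref{def:prot} does not apply, so $\prot{G}=\{*n : G^{\oset{\rightarrow}{L}}+*n\in\mathscr{L}\text{ for some check }G^{\oset{\rightarrow}{L}}\in\GL\}$. The plan is to bound the nimbers that can appear this way by a quantity computed from the finitely many followers of $G$, using the manoeuvrability hypothesis to control what happens after Left plays a check.

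\emph{Step 1: reduce to a single check option.} Fix a Left-check option $G^{\oset{\rightarrow}{L}}=\{\infty\mid G^{L\mathcal{R}}\}\in\GL$. It suffices to show that $\{*n : G^{\oset{\rightarrow}{L}}+*n\in\mathscr{L}\}$ is finite for each such option, since $G$ has only finitely many Left options and $\prot{G}$ is the union over them. In $G^{\oset{\rightarrow}{L}}+*n$, Right must respond (playing to $\infty$ loses for Right; that is the whole point of the check), so Right either plays inside $*n$, reaching $G^{\oset{\rightarrow}{L}}+*m$ with $m<n$, or plays to some $H+*n$ where $H\in G^{L\mathcal{R}}$. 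For $*n$ to be protected, Left must win all of these for Right — so in particular, for every $H\in G^{L\mathcal{R}}$, Left wins $H+*n$ playing first.

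\emph{Step 2: use manoeuvrability to push down to nimbers.} Each $H\in G^{L\mathcal{R}}$ is itself a follower of $G$; if $H$ is a nimber, then $H+*n\in\mathscr{L}$ forces $n$ to lie in a finite set (namely $n$ must differ from the fixed value of $H$, which is no constraint — so I must be more careful here and instead track when Left can actually \emph{win} $H+*n$ playing first). The key is to iterate: after Left plays in $H+*n$, by manoeuvrability Right can force, via checks, a Left move to a nimber $*k$ (with $k$ drawn from the finite set of nimber-followers of $G$) or a move to $\oinf$ — and $\oinf$ is a loss for Left. So the surviving line reaches a position of the form $*k + *n + (\text{remnant of }*n)$, i.e. essentially $*k + *j$ with $j\le n$. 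Left wins this playing \emph{second} (after Right's forcing) only if the Grundy values match up, which pins $n$ to within the finite set $\{k : *k \text{ is a nimber-follower of } G\}$ adjusted by the Sprague--Grundy values of the intermediate quiet positions. Collecting: $n$ is forced to lie in a finite set $N$ determined by the (finitely many) nimber-followers of $G$ and the (finitely many) check-followers traversed. Hence $\prot{G}$ is finite.

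\emph{Main obstacle.} The delicate point is Step~2: making precise how ``Right can force a Left move to a nimber'' translates into a numerical bound on $n$, because the forcing sequence of checks may itself involve positions whose outcomes depend on $n$ through the attached $*n$ component. I expect the clean way to handle this is an induction on the (finite) rank of the manoeuvrable form, showing simultaneously that for any quiet manoeuvrable follower $F$, the set $\{n : F+*n\in\mathscr{L}\text{ (first player)}\}$ is contained in a finite set determined by $F$'s nimber-followers — the check component $*n$ only ever gets decremented or survives unchanged through the forcing line, so once Right has driven Left to a nimber $*k$, the residual position is $*k+*j$ for some $j\le n$ reachable from $*n$, and Left winning that second-player forces $j=k$, hence $n\in\{k, k\oplus(\text{small corrections from quiet intermediates})\}$. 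Pinning down exactly which finite set, and verifying the induction goes through at check-upon-check steps, is the real content; the rest is bookkeeping over the finite option sets of $G$.
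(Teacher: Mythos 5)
Your plan points in the right direction --- the finitely many nimber-followers of $G$ should control $\prot{G}$ --- but it stops short of a proof at exactly the point you flag as the ``main obstacle'', and that obstacle dissolves once you use two facts you have not exploited. First, you do not need to pin down \emph{which} nimbers are protected; the lemma only asks for finiteness, so it suffices to exhibit a threshold beyond which $*n\notin \prot{G}$. Let $C$ be the (finite, since games are short) set of nimbers to which Right's manoeuvring strategy can drive Left, and take $n$ with $n>m$ for every $*m\in C$. Second --- and this is the step your ``$*k+*j$ with $j\le n$'' bookkeeping misses --- during Right's forcing sequence the $*n$ component is never touched: every Right move in the sequence is a check inside the $G$-component, so if Left ever answers by playing in $*n$ instead, Right moves to $\oinf$ and wins outright; and Right has no reason to play in $*n$ himself. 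Hence any line Left survives after her check $G^{\oset{\rightarrow}{L}}+*n$ ends either at $\oinf$ (a Right win) or at $*m+*n$ with $*m\in C$, $m<n$, and Right to move, and Right wins by the TweedleDee--TweedleDum reply $*n\mapsto *m$. So $*n\notin\prot{G}$ for all such $n$, and $\prot{G}$ is finite; no induction on rank and no exact determination of the protected set is needed.

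Two smaller points. Your Step 2 analyses $H+*n$ for \emph{every} $H\in G^{L\mathcal{R}}$ and then invokes manoeuvrability after an arbitrary Left move from an arbitrary such $H$; but manoeuvrability only hands Right a \emph{specific} forcing line (a particular check-answering option $H$ and particular subsequent checks), and that single line is all that is required --- quantifying over all $H$ is what pushes you into the induction you never carry out. And that induction, as sketched, is not yet an argument: the ``small corrections from quiet intermediates'' are undefined, and without the observation that $*n$ is frozen throughout the checking sequence there is no reason the endgame reduces to a two-heap Nim comparison at all.
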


\begin{proof}
After a Left first move in $G$, if needed, Right can force with checks a Left move to a nimber or a move by either player to $\oinf$.  Let $C$ be the set of nimbers that can arise through this forcing strategy by Right. Then $C$ is finite, because we study short games. Let $*n$ be a nimber such that, for all $*m\in C$, we have $n>m$. In $G+*n$, after a first check, say, to $G^L+*n$, Right forces with checks a move by either player to $\overline{\infty}$ or a Left move to $*m+*n$ ($n>m$). In the second case, after the sequence, Right wins with a TweedleDee-TweedleDum move. Thus, Left can protect against at most a finite number of nimbers. That explains why $P_G$ is finite in case of manoeuvrable games.
\end{proof}

Let $\mathrm{mex}(X)$ denote the smallest nonnegative integer not in $X$. Let $\mathcal{G}$ denote the set of Sprague-Grundy values of a set of nimbers, i.e. if $S=\{*n_i\}$, then $\mathcal{G}(S)=\{n_i\}$.

\begin{lemma}\label{man}
If $G\in\Imi$ is manoeuvrable then $G$ equals the nimber $*n$, where $n=\mathrm{mex}(\mathcal{G}(S_G\cup P_G))$.
\end{lemma}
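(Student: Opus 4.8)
The plan is to show the two inequalities $G \succcurlyeq_{\Imi} *n$ and $*n \succcurlyeq_{\Imi} G$ by playing the sum $G + *n$ against an arbitrary distinguishing game, and invoking Theorem~\ref{nimbers}; that is, it suffices to prove $G + *n \in \mathscr{P}$ where $n = \mathrm{mex}(\mathcal{G}(S_G \cup P_G))$. Since $G$ is quiet and symmetric, $G + *n$ is symmetric, so by Theorem~\ref{symmetric} it lies in $\mathscr{N} \cup \mathscr{P}$; thus it is enough to show the first player loses $G + *n$. By the Tweedledee-Tweedledum (conjugate) symmetry, it suffices to show Left (moving first) loses $G+*n$, i.e.\ every Left option is refuted by Right.

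The Left options of $G + *n$ split into: (a) a move $*n \to *m$ with $m < n$; (b) a Left move $G \to *m$ with $*m \in S_G$ (so $m \ne n$ by the mex condition); (c) a Left move to some $G^L$ that is neither a nimber nor a check (the interesting case); (d) a Left move to a Left-check $G^{\oset{\rightarrow}{L}}$ (only if $\infty \notin \GL$, since $G$ is quiet so $G$ itself is not a check, but its options may be). For (a) and (b), Right answers with the matching Tweedledee move (in case (a), $G \to *m$ exists in $\GR=\GL$; in case (b), $*n \to *m$), reaching $*m + *m = 0 \in \mathscr{P}$, and by induction/Corollary~\ref{thm:ftnp2} these are previous-player wins. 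For case (d): Left moved to $G^{\oset{\rightarrow}{L}} + *n$; if $*n \in P_G$ then by definition $G^{\oset{\rightarrow}{L}} + *n \in \mathscr{L}$, contradicting the mex choice of $n$ (so this cannot happen with $*n \in P_G$); otherwise $*n \notin P_G$, so Right, moving first in $G^{\oset{\rightarrow}{L}} + *n$, wins --- but wait, it is now Right's turn, and Right wins $G^{\oset{\rightarrow}{L}}+*n$ playing second only if $G^{\oset{\rightarrow}{L}}+*n \in \mathscr{R}\cup\mathscr{P}$; using manoeuvrability, Right forces (with checks) a Left move to some nimber in $C$ or a move to $\oinf$, and since $n$ exceeds every index in $\mathcal{G}(C)$, the resulting $*m + *n$ is won by Right via a Tweedledum response.

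The core case is (c): Left moves to $G^L + *n$ with $G^L$ quiet-ish but not a nimber and not $\oinf$. Here I invoke the definition of manoeuvrable directly: from $G^L$, Right can force with checks a Left move to a nimber $*m \in C$ or a move by either player to $\oinf$. In the $\oinf$ branch Right wins outright (Axiom). In the nimber branch, after the forced check-sequence the position is $*m + *n$ with $m \in \mathcal{G}(C)$, hence $m < n$, and it is Right's turn, so Right plays the Tweedledum move $*n \to *m$ reaching $*m+*m=0$, a $\mathscr{P}$-position by Corollary~\ref{thm:ftnp2}, so Right wins. One must check that these forcing moves by Right stay valid in the presence of the extra $*n$ component: since $*n$ is quiet and a Conway game, Right's checks in $G^L$ are never disrupted --- Left cannot "answer a check" by moving in $*n$, because a check carries infinite urgency (Left must respond to $\inf$ or lose). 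I also need the base of the induction: if $G$ has no non-nimber, non-$\oinf$ Left options at all, then $S_G = \GL \cap \Im = \GL$ and $P_G$ may or may not be empty, but the argument for cases (a),(b),(d) still closes it.

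The main obstacle I expect is the careful bookkeeping in case (c)/(d) that Right's check-forcing strategy in the $G$-part genuinely transfers to $G + *n$, i.e.\ that Left is never able to "escape" the manoeuvring by switching to play in the $*n$ summand and that the induction is well-founded (each forced exchange strictly decreases the game rank). The definition of check (infinite urgency) is exactly what makes this work, but writing it so that the "force with checks" phrase from Definition~\ref{def:maneu} is applied rigorously inside a disjunctive sum --- and simultaneously handling that $n$ was chosen as a mex so that no element of $S_G \cup P_G$ equals $n$ --- is the delicate part. A secondary subtlety is making sure $P_G$ (which Lemma~\ref{ff} guarantees is finite) is used only through the single fact that $n \notin \mathcal{G}(P_G)$, so that whenever Left tries a check, that check cannot win against $*n$.
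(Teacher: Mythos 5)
Your overall skeleton (reduce to showing $G+*n\in\mathscr{P}$ via Theorem~\ref{nimbers} and Theorem~\ref{symmetric}, then refute each first-player move) matches the paper, but your treatment of the central case (c) contains a genuine gap. You claim that after Right's forcing sequence the position is $*m+*n$ with ``$m<n$, since $n$ exceeds every index in $\mathcal{G}(C)$.'' That inequality is simply not available here: it is the inequality used in the proof of Lemma~\ref{ff}, where the nimber index is \emph{chosen} larger than everything in $\mathcal{G}(C)$ in order to prove finiteness of $P_G$. In Lemma~\ref{man} the number $n$ is $\mathrm{mex}(\mathcal{G}(S_G\cup P_G))$, which can be as small as $0$ and bears no relation to the set $C$ of nimbers reachable by the forcing. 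If Left, while answering the checks, can steer to a position equal to $*n$, the Tweedledum response is unavailable and Right's manoeuvring line actually \emph{loses} ($*n+*n\in\mathscr{P}$ with Right to move). The paper sidesteps this entirely: in case (c) the option $G^L$ is a quiet non-nimber, so by Theorem~\ref{nimbers} $G^L+*n\notin\mathscr{P}$, and since $G^L+*n$ is symmetric it lies in $\mathscr{N}$; hence Right, to move, wins --- no description of Right's strategy is needed, and manoeuvrability is used only through Lemma~\ref{ff} to guarantee that $S_G\cup P_G\neq\Im$ so that the mex is a genuine natural number.

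Two smaller gaps. In case (a), when Left plays $*n\to *m$ with $m<n$, the mex only guarantees $*m\in S_G\cup P_G$; if $*m\in P_G\setminus S_G$ there need not be any option of $G$ equal to $*m$, so the ``matching Tweedledee move $G\to *m$'' may not exist and Right must instead win with the check witnessing $*m\in P_G$ (this is exactly the paper's ``direct TweedleDee-TweedleDum move \emph{or} wins with a check''). In case (d) you momentarily confuse who is moving first: after Left plays to $G^{\oset{\rightarrow}{L}}+*n$ it is Right's turn to move \emph{first} in that position; since $G^{\oset{\rightarrow}{L}}$ is a Left-check, $G^{\oset{\rightarrow}{L}}+*n\in\mathscr{L}\cup\mathscr{N}$ automatically, and $*n\notin P_G$ rules out $\mathscr{L}$, so the position is in $\mathscr{N}$ and Right wins --- you do not need (and should not fall back on) the same manoeuvring claim as in case (c).
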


\begin{proof}
By Lemma \ref{ff}, we know that $S_G\cup P_G$ is finite. Let $n=\mathrm{mex}(\mathcal{G}(S_G\cup P_G))$. Let us argue that the game $G+*n\in\mathscr{P}$. If the first player moves in $G$ to a nimber $*m\in S_G$, because $n$ is excluded from $\mathcal{G}(S_G)$, he loses. If the first player moves in $G$ to a quiet not nimber $G'$, because $G'$ is not a nimber, $G'+*n\in\mathscr{N}$ (Theorem~\ref{nimbers}), and the first player also loses. If the first player moves in $G$, giving a check, because  $n$ is excluded from $\mathcal{G}(P_G)$, he also loses. Finally, if the first player moves to $G+*n'$ ($n'<n$), because $n$ is the minimum excluded from $\mathcal{G}(S_G\cup P_G)$, he loses because the opponent has a direct TweedleDee-TweedleDum move or wins with a check. Hence, by Theorem~\ref{nimbers}, $G=*n$.
\end{proof}

\begin{lemma}\label{tl}
If $G,H\in\Imi$ are not nimbers, then $G+H\in\N$.
\end{lemma}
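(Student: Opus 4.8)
The statement to prove is that if $G,H\in\Imi$ are not nimbers, then $G+H\in\N$; equivalently, by Corollary~\ref{thm:ftnp2} and Theorem~\ref{symmetric}, it suffices to show $G+H\notin\P$ (since $\Imi$ is closed under addition, $G+H$ is symmetric, hence $G+H\in\N\cup\P$). So the real content is: the first player has a winning move in $G+H$.

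The approach I would take is induction on the birthday (formal rank) of $G+H$, combined with a case analysis on whether $G$ and $H$ are quiet or checks. First, suppose $G$ is a check, say a Left-check, so $\infty\in\GL$. Then the first player, if it is Left, moves $G+H\to\infty+H=\infty\in\mathscr{L}$ by Axiom~3 (using $H\neq\oinf$, which holds since $H$ is not a nimber and in particular $H\neq\oinf$), winning immediately; by symmetry $\oinf\in\GR$ as well, so Right wins analogously. Thus if either $G$ or $H$ is a check we are done. The remaining case is that both $G$ and $H$ are quiet. Now I would invoke the manoeuvrability dichotomy implicit in the preceding lemmas: if either $G$ or $H$ were manoeuvrable, Lemma~\ref{man} would force it to equal a nimber, contradicting the hypothesis. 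So both $G$ and $H$ are quiet and non-manoeuvrable. Non-manoeuvrability of $G$ means there is a Left option $G^L$ that is neither a nimber nor $\oinf$ such that Right \emph{cannot} force (with checks) a Left move to a nimber or a move to $\oinf$ — that is, Left can eventually reach a position that is quiet and not a nimber, surviving all of Right's checking threats. I would move $G+H\to G^L+H$ and argue this is a winning first move.

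The heart of the argument — and the step I expect to be the main obstacle — is making precise why $G^L+H$ is a win for the player who just moved there, i.e. why it lies in $\P$ or at worst why the mover can maintain the advantage. The clean way is: after moving to $G^L+H$, the opponent (Right, say) must respond; if Right plays a check in either component, the non-manoeuvrability of the reached position lets Left escape the check (possibly through a chain of check-upon-check) back to a quiet non-nimber sum, and the birthday has strictly decreased along any such chain so induction applies to the resulting quiet non-nimber sum $G'+H'$, giving $G'+H'\in\N$ — but it is Left's opponent to move there, so Left wins. If instead Right plays a quiet move in, say, $H$, reaching $G^L+H^R$ where $H^R$ may or may not be a nimber: if $H^R$ is a nimber then $G^L+H^R$ is still (quiet non-nimber) $+$ (nimber), which by Theorem~\ref{nimbers} is in $\N$ with Left to move — Left wins; if $H^R$ is a quiet non-nimber, induction on $G^L+H^R$ applies directly. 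One has to organize the induction carefully so that the "escape from a check chain" is itself covered: the right induction hypothesis is on the sum of birthdays, and every Right response (quiet move, or check that Left must answer) strictly reduces it before control returns to the inductive case. I would also need the symmetric statement handled by conjugation, which is automatic since $\Imi$ is closed under conjugates and outcomes respect conjugation.

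One subtlety to flag: Lemma~\ref{man} as stated requires $G$ manoeuvrable to conclude $G$ is a nimber, but the contrapositive I want is "not a nimber $\Rightarrow$ not manoeuvrable," which is exactly the contrapositive and hence valid. A second subtlety: I should make sure the definition of manoeuvrable (Definition~\ref{def:maneu}) is being negated correctly — its failure gives \emph{one} bad Left option $G^L$, and that is precisely the option I move to; I must check that this $G^L$ is neither a nimber (good, it's non-nimber by the negation) nor $\oinf$ (also excluded in the definition), so $G^L\in\Imi$ is a legitimate quiet-or-check form to continue the induction with, and when it is itself a check the immediate-win case above re-applies. Assembling these cases with a single induction on total birthday should complete the proof.
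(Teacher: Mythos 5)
Your reduction to ``the first player wins $G+H$'', your dispatch of the case where $G$ or $H$ is a check, and your use of Lemma~\ref{man} to extract a Left option $G^L$ that is neither a nimber nor $\oinf$ all match the skeleton of the paper's argument (its Claim~2 is exactly the non-manoeuvrability step). The genuine gap is in how you handle the opponent's checks after the move to $G^L+H$. Non-manoeuvrability of $G$ (Definition~\ref{def:maneu}) only says that \emph{after a Left move in $G$}, Right cannot force, by checking in that component, a Left move to a nimber or a move to $\oinf$. It says nothing about Right \emph{opening} a sequence of checks in $H$, the component Left did not touch, and ``non-manoeuvrability of the reached position'' does not parse, since the reached position is not quiet. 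It is entirely possible that Right, moving first in $H$, can check and force Left down to a nimber there: take $H=\cg{*,\cg{\inf}{*}}{*,\cg{*}{\oinf}}$ (the paper's own example of a non-nimber equal to $\moon$); Right moves to $\cg{*}{\oinf}$ and Left is forced to $*$. Right then stands at $G^L+*$ with Right to move, which is in $\N$ by Theorem~\ref{nimbers} because $G^L$ is not a nimber, so Right wins and your proposed first move \emph{loses}. In that situation the correct winning first move in $G+H$ is the forcing check in $H$ itself. So a direct proof needs an explicit dichotomy: either some player, moving first in one component, can force the opponent to a nimber or to $\oinf$ there --- in which case the first player of $G+H$ wins by doing exactly that and then applying Theorem~\ref{nimbers} --- or neither can, and only then does the $G^L+H$ line go through. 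The paper obtains this dichotomy for free by arguing from a minimal $\P$-position: its Claim~1 is proved \emph{using} the assumption $G+H\in\P$, not from non-manoeuvrability, which is precisely the ingredient your direct induction is missing.

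A second, smaller slip: after Left escapes a chain of checks to a quiet non-nimber sum $G'+H'$, it is Right's turn, so concluding ``$G'+H'\in\N$, hence Left wins'' is backwards --- an $\N$-position with the opponent to move is a win for the opponent. The induction must be applied one ply later, at the position reached by Right's first \emph{quiet} move (which exists because game trees are finite), where it is Left's turn and either both components are still non-nimbers (induction applies) or the freshly moved component is a nimber (Theorem~\ref{nimbers} applies); this is exactly how the paper closes its contradiction.
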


\begin{proof}
Consider $G,H\in\Imi\setminus \Im$. For a contradiction, assume that the sum of the birthdays, $b=b(G)+b(H)$, is the smallest possible such that $G+H\in\P$. Note that, by the assumptions on $G$ and $H$, $b>0$. Without loss of generality, we will analyze the move from $G+H$ to $G+H^L$. First, we prove two claims that concern local play in $G$ and $H$ respectively.\\

\noindent {\bf Claim 1.} Playing second in $G$, Left can avoid Left moves to nimbers and moves by either player to $\overline{\infty}$ until the first Right-quiet move.\\

\noindent {\em Proof of Claim 1.}
Suppose that Right, playing first in $G$, could force a Left move to a nimber or a move by either player to $\oinf$. If so, in $G+H$, by giving checks in $G$, Right could force some $G^{\oset{\leftarrow}{R}L\cdots\oset{\leftarrow}{R}L}+ H= *n+H$ (Right's turn) or a move by either player to $\oinf + H$. Of course, the second situation would be a victory for Right. Regarding the first case, at that moment, the position would be $*n+H$. And, because $H$ is not a nimber, by Theorem~\ref{nimbers}, we would have $*n + H \in \N$, which is a winning move for Right. In either case, Right, as first player, would win. That would contradict $G + H \in \P$.\\


\noindent {\bf Claim 2.} There is an $H^L$ such that Left can avoid Left moves to nimbers and moves by either player to $\overline{\infty}$, until the first Right-quiet move.\\

\noindent {\em Proof of Claim 2.} This is exactly the same as saying that $H$ is non-manoeuvrable. If it was  manoeuvrable, by Lemma \ref{man}, it would be a nimber, and we would have a contradiction again.\\
%
%

Let us return to the move from $G+H$ to $G+H^L$. Because $G+H\in\P$, Right has a winning move from $G+H^L$. But, by Claims 1 and 2, Left can play such that, at any stage before a Right-quiet move, Right is moving on $g+h$, where $g$ is a follower of $G$ and $h$ is a follower of $H^L$, such that neither $g$ nor $h$ is a nimber.

Either way, by assumption, there is a winning quiet Right-move $g^R+h$ or $g+h^R$. Since these are impartial games, we must have $g^R+h\in\P$ or $g+h^R\in\P$. But, because $h$ and $g$ are not nimbers, it follows by Theorem~\ref{nimbers} that $g^R$ and $h^R$ are not nimbers.

Therefore, we have $g^R+h\in\P$ or $g+h^R\in\P$ with both components not nimbers. But this contradicts the smallest birthday assumption. The result follows.
\end{proof}

\begin{theorem}[Affine Impartial Values]\label{gsg}
Every affine impartial form equals a nimber or the game $\moon\pmod{\Imi}$.
\end{theorem}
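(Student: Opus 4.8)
The plan is to prove a clean dichotomy: either $G$ already equals a nimber modulo $\Imi$ (and there is nothing to do), or else $G$ cannot be distinguished from $\moon$. So I would fix $G\in\Imi$ and assume $G\notin\Im$, i.e.\ $G$ is not equal to $*n$ modulo $\Imi$ for any $n\ge 0$. By Definition~\ref{equalityI} it then suffices to show that $o(G+X)=o(\moon+X)$ for every $X\in\Imi$, and in fact I would prove the sharper statement that \emph{both} of these outcomes are always $\mathscr{N}$.

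First I would compute $o(\moon+X)$. Since $X\in\Imi$ is symmetric, $X\notin\{\inf,\oinf\}$, so by Axioms~3 and~4 the first player moving inside $\moon=\moonformsimple$ wins immediately: Left moves to $\inf+X=\inf\in\mathscr{L}$, and Right moves to $\oinf+X=\oinf\in\mathscr{R}$. Hence $\moon+X\in\mathscr{N}$ for all $X\in\Imi$, which also re-derives that $\moon$ behaves loonily on $\Imi$.

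Next I would compute $o(G+X)$. Because $\Imi$ is closed under addition, $G+X\in\Imi$, so in particular $G+X$ is symmetric, and Theorem~\ref{symmetric} gives $G+X\in\mathscr{N}\cup\mathscr{P}$. It remains to rule out $G+X\in\mathscr{P}$, and I would split on whether $X$ is a nimber. If $X=*n$, then $G+*n\in\mathscr{P}$ would force $G=_{\Imi}*n$ by Theorem~\ref{nimbers}, contradicting $G\notin\Im$. If $X$ is not a nimber, then $G$ and $X$ are two non-nimbers of $\Imi$, so Lemma~\ref{tl} forces $G+X\in\mathscr{N}$, again contradicting $G+X\in\mathscr{P}$. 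Either way $G+X\in\mathscr{N}$, so $o(G+X)=\mathscr{N}=o(\moon+X)$ for all $X\in\Imi$, whence $G=_{\Imi}\moon$.

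I do not expect a genuine obstacle at this stage: the substantive work has already been carried out in Lemma~\ref{tl} (which runs the birthday induction and draws on the manoeuvrable analysis of Lemma~\ref{man}) and in Theorem~\ref{nimbers}. The only delicate point will be bookkeeping of the quantifier universe --- impartial equality ranges over $X\in\Imi$ only --- so one must stay inside $\Imi$, repeatedly using closure under addition and the fact that members of $\Imi$ are never $\inf$ or $\oinf$, rather than appealing to the coarser notion of a loony game (which quantifies over all of $\Npi$). One could alternatively phrase the conclusion as ``$G$ is loony, hence $G=_{\Imi}\moon$ by Theorem~\ref{loo}'', but establishing looniness in the full $\Npi$ sense is unnecessary here and would cost more; the direct outcome computation above is the shortest route.
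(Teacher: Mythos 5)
Your proof is correct and follows essentially the same route as the paper's: both reduce everything to Theorem~\ref{nimbers} and Lemma~\ref{tl}, using symmetry (Theorem~\ref{symmetric}, via closure of $\Imi$ under addition) to confine $G+X$ to $\mathscr{N}\cup\mathscr{P}$ before ruling out $\mathscr{P}$. The only difference is in the wrap-up: where the paper declares $G$ loony and invokes Theorem~\ref{loo}, you inline the computation $o(\moon+X)=\mathscr{N}$, which is if anything slightly tidier, since looniness as defined quantifies over all quiet $X\in\Npi\cap(\mathscr{N}\cup\mathscr{P})$ rather than just over $\Imi$.
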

\begin{proof} Let $G\in\Imi$. If there is some $*n$ such that $G+*n\in\P$, then $G=_{\Imi}*n$, by Theorem \ref{nimbers}.

Suppose next that $G+*n\in\N$, for all $n$, so that $G$ does not equal a nimber modulo $\Imi$. By Lemma~\ref{tl}, for all $X\in\Imi\setminus \Im$, we also have $G+X\in \N$. Hence, for all $X\in\Imi$, we have $G+X\in\N$, and therefore $G$ is a loony game. Because, by Theorem \ref{loo}, all loony games are equal modulo $\Imi$, and $\moon$ is a loony game, we have $G=_{\Imi}\moon$.
\end{proof}

\begin{observation}
A form can be loony modulo $\Imi$ and not loony modulo $\Npi$. An example is the form $G=\cg{*,\cg{\inf}{*}}{*,\cg{*}{\oinf}}$. This game is not loony modulo $\Npi$ because, if $X=\cg{0}{-1}\in\mathscr{N}$, playing first, Left loses $G+X$. However, $G=_{\Imi} \moon$. This follows, by Theorem~\ref{gsg}, since $G$ does not equal any nimber;  if Right starts $G+*n$, he wins, by an appropriate parity consideration.
\end{observation}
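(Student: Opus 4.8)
The plan is to prove the two halves of the observation separately: that $G$ is not loony modulo $\Npi$, and that $G=_{\Imi}\moon$. I would open by recording that $G$ is indeed affine impartial — it is symmetric, since $\conjS{\GL}=\{*,\cg{*}{\oinf}\}=\GR$, and its only non-nimber options $\cg{\inf}{*}$ and $\cg{*}{\oinf}$ are checks, hence the quiet followers ($G$, $*$, $0$) are all symmetric — so both Theorem~\ref{gsg} and Theorem~\ref{symmetric} apply.

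For the first half, to disprove looniness modulo $\Npi$ it suffices to exhibit one quiet $X\in\Npi\cap(\N\cup\P)$ with $G+X\notin\N$. I would take $X=\cg{0}{-1}$, noting $X$ is quiet and lies in $\N$ (Left moves first to $0\in\P$, Right moves first to $-1\in\R$). The goal is then to show that, playing first, Left loses $G+X$, so that $G+X\in\P\cup\R$. I would enumerate Left's three opening moves: into $G$ reaching $*+X$ or $\cg{\inf}{*}+X$, and into $X$ reaching $G+0=G$. The last is immediate since $G\in\N$, so Right (to move) wins. The other two both hinge on the short computation that $*+X\in\R$: Left moving first in $*+X$ can only reach $X\in\N$ or $*\in\N$, both won by Right, while Right moving first reaches $*-1\in\R$; and the check $\cg{\inf}{*}+X$ is met by Right's reply $\cg{\inf}{*}\to *$, again giving $*+X$ with Left to move. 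Since every Left option of $G+X$ is a Right win, Left loses moving first, establishing $G+X\notin\N$.

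For the second half I would invoke Theorem~\ref{gsg}: as $G\in\Imi$, it equals a nimber or $\moon$ modulo $\Imi$, so it is enough to exclude every nimber. By Theorem~\ref{nimbers} this means showing $G+*n\notin\P$ for all $n$, and since $G+*n\in\Imi$ is symmetric, Theorem~\ref{symmetric} puts it in $\N\cup\P$; thus I only need the first player to win $G+*n$ for every $n$. I would let Right move first and split on the collision with $*$. When $n=1$, Right plays $G\to *$, reaching $*+*=0$ with Left to move, a loss for Left. When $n\neq 1$, Right instead plays the check $G\to\cg{*}{\oinf}$: Left cannot move in $*n$ (Right would answer $\cg{*}{\oinf}\to\oinf$ and win by the absorbing nature of $\oinf$), so Left must defuse by $\cg{*}{\oinf}\to *$, reaching $*+*n=*(1\oplus n)$ with $1\oplus n\neq 0$ and Right to move, so Right wins by moving to $0$. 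Hence $G+*n\in\N$ for every $n$, no nimber equals $G$, and Theorem~\ref{gsg} forces $G=_{\Imi}\moon$.

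The main obstacle is this final case split: the uniform ``defend the check'' strategy that wins for all $n\neq 1$ fails exactly at $n=1$, where $*+*n=0$ leaves the checking player to move in a $\P$-position, so $n=1$ must instead be handled by the plain move $G\to *$. Identifying that this dichotomy is forced is precisely the ``appropriate parity consideration'' the statement alludes to; once it is in place, the remaining outcome evaluations (including $*+X\in\R$ and $*-1\in\R$) are short and mechanical.
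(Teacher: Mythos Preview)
Your proof is correct and follows the same line as the paper, which in fact gives only the sketch embedded in the observation itself (the distinguishing game $X=\cg{0}{-1}$ and the appeal to Theorem~\ref{gsg} via a first-player win in $G+*n$). You have supplied exactly the missing details: the verification that $*+X\in\R$, and the case split $n=1$ versus $n\neq 1$ governing whether Right should play the quiet option $G\to *$ or the check $G\to\cg{*}{\oinf}$, which is precisely the ``parity consideration'' the paper alludes to.
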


\begin{theorem}\label{absorbing}
The game $\moon$ is absorbing modulo $\Imi$, that is, $\moon+Y=_{\Imi}\moon$, for all $Y\in \Imi$.
\end{theorem}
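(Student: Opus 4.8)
The plan is to use Theorem~\ref{gsg}, which tells us that every affine impartial form is either a nimber or equal to $\moon$ modulo $\Imi$. So it suffices to check the two cases $Y=_{\Imi}*n$ and $Y=_{\Imi}\moon$ separately, and by Definition~\ref{equalityI} it is enough to show $o(\moon+Y+X)=o(\moon+X)$ for every $X\in\Imi$.

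First I would dispose of the case $Y=_{\Imi}\moon$. Since $\moon$ is loony and $\moon=_{\Imi}Y$ means $o(\moon+X)=o(Y+X)$ for all $X\in\Imi$, I want $\moon+Y$ to be loony as well. For this I would argue directly from the definition of loony (or from the proof technique of Theorem~\ref{loo}): for any quiet $X\in\Imi\cap(\N\cup\P)$, a first player facing $\moon+Y+X$ can move in the $\moon$-component, say Left moving $\moon\to\cg{\inf}{0}$, forcing Right to answer in that component to avoid losing to $\inf$; after Right's forced reply the $\moon$-component is $0$, so the position becomes $Y+X$, and since $Y=_{\Imi}\moon$ is loony, $Y+X\in\N$, so the original mover wins. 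If $X$ is not quiet, then $\inf\in X^{\mathcal L}$ and the first player wins immediately. Hence $\moon+Y\in\N$ for all $X\in\Imi$; equivalently $\moon+Y$ is loony, so by Theorem~\ref{loo}, $\moon+Y=_{\Imi}\moon$. (Alternatively, one can just cite Lemma~\ref{tl}: since $\moon$ and $Y$ are both not nimbers, $\moon+Y\in\N$ against every $X$ after a birthday/parity bookkeeping, but the loony argument is cleaner.)

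Next I would handle $Y=_{\Imi}*n$. Here I cannot conclude $\moon+*n$ is loony from Lemma~\ref{tl} directly, so instead I would show $o(\moon+*n+X)=o(\moon+X)$ for all $X\in\Imi$ by exhibiting a strategy pairing. The key point is that $\moon=\cg{\inf}{\oinf}$ contains the terminating threat: whoever wishes to, the first player in $\moon+*n+X$ can move $\moon\to\inf$ (if Left) or $\moon\to\oinf$ (if Right) and win outright, and symmetrically the opponent can never usefully play in the $\moon$-component without handing the mover a terminating reply. So $\moon+*n+X\in\N$ for \emph{every} $X$, and likewise $\moon+X\in\N$ for every $X$; hence the outcomes agree. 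Actually the slickest route is uniform: observe that for any $Z\in\Imi$ (in particular $Z=*n+X$ or $Z=X$), the position $\moon+Z$ is in $\N$, because the first player moves in $\moon$ to $\inf$ (Left) or $\oinf$ (Right) and wins by the Absorbing Nature of Infinities (Axiom~\ref{axiom}), unless $Z$ itself already contains $\inf$ or $\oinf$ as an option of the first player — in which case the first player uses that instead. Either way $o(\moon+Z)=\N$, so $o(\moon+Y+X)=\N=o(\moon+X)$ and $\moon+Y=_{\Imi}\moon$.

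The main obstacle is the bookkeeping in the $Y=_{\Imi}\moon$ case: one must be careful that moving in the $\moon$-component of $\moon+Y+X$ really does reduce, after the forced exchange, to $Y+X$ with the $\moon$-component gone (i.e.\ replaced by $0$, which vanishes in a disjunctive sum), and that no subtlety with Axiom~5 (the undefined sum $\inf+\oinf$) intrudes — it does not, since $X\in\Imi$ never forces $\oinf$ opposite our $\inf$. Once that is checked, everything reduces to Theorems~\ref{gsg} and~\ref{loo} and Axiom~\ref{axiom}. I would also remark at the end that, combined with Theorem~\ref{gsg}, this shows the quotient monoid $\Imi/{=_{\Imi}}$ is the nimber group with an adjoined absorbing element $\moon$.
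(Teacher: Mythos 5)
Your proposal is correct, and the ``slickest route'' you arrive at in the final paragraph is exactly the paper's proof: since $\moon=\cg{\inf}{\oinf}$, the first player wins both $\moon+Y+X$ and $\moon+X$ outright by moving to $\inf$ or $\oinf$ (Axiom~\ref{axiom}), so both outcomes are $\N$ for every $X\in\Imi$. The preceding case split via Theorem~\ref{gsg} into $Y=_{\Imi}*n$ and $Y=_{\Imi}\moon$ (and the loony-game bookkeeping, which moreover momentarily conflates the literal form $\cg{\inf}{\oinf}$ with the form $\moonform$) is unnecessary scaffolding once the uniform observation is in hand.
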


\begin{proof}
Since $\moon=\pm\infty$, regardless of what $X\in \Imi$ is, the first player wins both $\moon+Y+X$ and $\moon+X$. Therefore, by definition of equality of games, $\moon+Y=_{\Imi}\moon$.
\end{proof}

\begin{corollary}
The game $\moon$ is an idempotent modulo $\Imi$, that is, $\moon+\moon=_{\Imi}\moon$.
\end{corollary}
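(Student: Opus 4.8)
The plan is to derive the corollary immediately from Theorem~\ref{absorbing}, which establishes that $\moon$ is absorbing modulo $\Imi$. Since $\moon+Y=_{\Imi}\moon$ holds for \emph{all} $Y\in\Imi$, the only thing to check is that $\moon$ itself lies in $\Imi$, so that $Y=\moon$ is a legitimate substitution.

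First I would verify that $\moon=\pm\infty=\cg{\inf}{\oinf}$ is an affine impartial form in the sense of Definition~\ref{def:impartial}. It is symmetric: $\moon\notin\{\inf,\oinf\}$, and $\moon^{\mathcal R}=\{\oinf\}=\conjS{\{\inf\}}=\conjS{\moon^{\mathcal L}}$ since $\conj{\inf}=\oinf$. Its only followers are $\inf$ and $\oinf$, which are not quiet (by Definition~\ref{def:quiet}), so the condition ``all quiet followers are symmetric'' is vacuously satisfied. Hence $\moon\in\Imi$. (Alternatively one can simply note, as the paper does repeatedly, that $\moon$ is a loony game and loony games were exhibited to live inside $\Imi$.)

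Then the corollary is one line: apply Theorem~\ref{absorbing} with $Y=\moon$ to obtain $\moon+\moon=_{\Imi}\moon$. I do not anticipate any obstacle here — this is a purely formal consequence. If one wanted a self-contained argument instead of citing Theorem~\ref{absorbing}, one could repeat its proof directly: since $\moon=\pm\infty=\cg{\inf}{\oinf}$ is the trivial check, for every $X\in\Imi$ the first player to move in $\moon+\moon+X$ wins (moving in one $\moon$-component to $\inf$ if Left, or to $\oinf$ if Right, invokes the absorbing nature of infinities, Axiom~\ref{axiom}), and likewise the first player wins $\moon+X$; hence $o(\moon+\moon+X)=\mathscr N=o(\moon+X)$ for all $X\in\Imi$, giving $\moon+\moon=_{\Imi}\moon$.
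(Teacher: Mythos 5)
Your proof is correct and follows the paper's own route exactly: the corollary is the special case $Y=\moon$ of Theorem~\ref{absorbing}. The extra verification that $\moon\in\Imi$ is a reasonable (if unstated in the paper) sanity check, and the optional self-contained argument is just the proof of Theorem~\ref{absorbing} restated.
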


\begin{proof}
This is a trivial consequence of Theorem~\ref{absorbing}.
\end{proof}

%

\begin{definition}
The Sprague-Grundy value of the moon is $\mathcal{G}($\moon$)=\infty$.
\end{definition}

The following theorem explains how the Sparague-Grundy value of $G\in\Imi$ is determined by the set $S_G\cup P_G$.

\begin{theorem} [Affine Impartial Minimum Excluded Rule]\label{gmr}
Let $G\in\Imi$. We have the following possibilities:
\begin{itemize}
  \item If $S_G\cup P_G=\Im$, then $G=\moon$ and $\mex(\mathcal{G}(S_G\cup P_G))=\infty$;
  \item If $S_G\cup P_G\neq\Im$, then $G=*\left(\mex(\mathcal{G}(S_G\cup P_G))\right)$.
\end{itemize}
\end{theorem}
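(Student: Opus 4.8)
The plan is to build on Theorem~\ref{gsg}, which already guarantees that $G$ equals a nimber or the game $\moon$ modulo $\Imi$; what remains is to decide which alternative occurs, and with which index, from the data $S_G\cup P_G$. The working tool throughout is Theorem~\ref{nimbers}: $G=_{\Imi}*n$ exactly when $G+*n\in\P$. Since $\Imi$ is closed under addition, each position $G+*n$ is symmetric, hence lies in $\N\cup\P$ by Theorem~\ref{symmetric}; so everything reduces to deciding, for the relevant $n$, whether the first player has a winning move in $G+*n$, and by symmetry it suffices to inspect Left's moves.

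First I would record a \emph{Claim}: if $*k\in S_G\cup P_G$, then $G+*k\in\N$, witnessed by a winning Left first move. Indeed, if $*k\in S_G=\GL\cap\Im$, Left moves to $G^L+*k=_{\Imi}*k+*k=0\in\P$; if $*k\in P_G$ with $\infty\in\GL$, Left moves to $\infty+*k=\infty\in\L$; and if $*k\in P_G$ with $\infty\notin\GL$, then by Definition~\ref{def:prot} there is a Left-check $G^{\oset{\rightarrow}{L}}\in\GL$ with $G^{\oset{\rightarrow}{L}}+*k\in\L$, to which Left moves. The first bullet of the theorem is then immediate: if $S_G\cup P_G=\Im$, the Claim gives $G+*k\in\N$ for every $k$, so $G$ equals no nimber by Theorem~\ref{nimbers}, hence $G=_{\Imi}\moon$ by Theorem~\ref{gsg}; and $\mathcal{G}(S_G\cup P_G)$ then contains every nonnegative integer, so its $\mex$ is $\infty$, which is also $\mathcal{G}(\moon)$ by definition.

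For the second bullet, assume $S_G\cup P_G\neq\Im$, put $n=\mex(\mathcal{G}(S_G\cup P_G))$ (a genuine nonnegative integer, since some nimber is missing), and note $\infty\notin\GL$ (otherwise $P_G=\Im$). The goal is $G+*n\in\P$; by the remarks above it suffices to show Left has no winning move in $G+*n$, i.e.\ that after every Left move Right, to move, wins. Left's moves are of two kinds. If Left moves to $G+*m$ with $m<n$, then $*m\in S_G\cup P_G$ because $m<\mex$, and Right replies either with the TweedleDee--TweedleDum move $*m+*m=0$ (if $*m\in S_G$) or, using symmetry of $G$, with the conjugate Right-check of a protecting Left-check, landing in $\R$ (if $*m\in P_G$). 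If Left moves to $G^L+*n$ with $G^L\in\GL$, the subcases are: $G^L\in S_G$, so it equals a nimber $*m$ with $m\neq n$ and $G^L+*n=_{\Imi}*m+*n\in\N$; $G^L=\oinf$ (so $G^L+*n=\oinf\in\R$) or $G^L$ a Right-check (so Right replies to $\oinf\in\R$); $G^L$ a Left-check $G^{\oset{\rightarrow}{L}}$, where $G^{\oset{\rightarrow}{L}}+*n\in\L\cup\N$ since Left could jump to $\infty$, while $*n\notin P_G$ (as $n=\mex$) rules out $\L$, leaving $\N$; and $G^L$ quiet but not a nimber, whence $G^L\in\Imi$ and $G^L=_{\Imi}\moon$ by Theorem~\ref{gsg}, so $G^L+*n\in\N$. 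In every case Right, to move, wins, so $G+*n\in\P$, and Theorem~\ref{nimbers} gives $G=_{\Imi}*n$.

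The main obstacle is the exhaustiveness and bookkeeping of the case split for $G^L$ in the second bullet: one must cover the non-symmetric options that $\Imi$ tolerates because they are not quiet (namely $\oinf$ and one-sided checks), invoke the $S_G$/$P_G$ definitions on the correct side (Left versus Right, especially the clause ``$\infty\in\GL\Rightarrow P_G=\Im$''), confirm that a quiet follower of $G\in\Imi$ is again in $\Imi$ so Theorem~\ref{gsg} applies to it, and verify carefully that $n=\mex(\mathcal{G}(S_G\cup P_G))$ is excluded from both $\mathcal{G}(S_G)$ and $\mathcal{G}(P_G)$ while every smaller value is present --- it is this last numeric fact that makes ``Left cannot reach her own Grundy value'' and ``Right can punish every decrease'' hold simultaneously.
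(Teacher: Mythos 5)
Your proof is correct and follows essentially the same route as the paper: the first bullet is handled exactly as in the paper (every $*k\in S_G\cup P_G$ gives a winning first move in $G+*k$, so $G$ is no nimber and Theorem~\ref{gsg} forces $G=_{\Imi}\moon$), and your second bullet is precisely the argument of Lemma~\ref{man} that the paper's proof invokes by reference, with the case analysis on Left's options spelled out in more detail (including the non-quiet options $\oinf$ and one-sided checks that the paper glosses over). No gaps.
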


\begin{proof} If $\imm{G}\cup \prot{G}=\Im$, we have $G+*n\in\N$ for all $n$. Because of that, $G$ is not a nimber and, by Theorem~\ref{gsg}, $G=\moon$. If $\imm{G}\cup \prot{G}\neq\Im$, we use the same argument of the proof of Lemma ~\ref{man}.
\end{proof}

\begin{corollary}
If all the options of a game $G\in\Imi$ are quiet then $G$ is a nimber.
\end{corollary}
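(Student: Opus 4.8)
The plan is to deduce this from Theorem~\ref{gmr} by examining when a game with all quiet options can fail to be a nimber. Suppose for contradiction that $G\in\Imi$ has all options quiet but $G$ is not a nimber. By Theorem~\ref{gsg}, $G=_{\Imi}\moon$, and by Theorem~\ref{gmr} this forces $S_G\cup P_G=\Im$. So the first thing to check is what $S_G$ and $P_G$ look like when every option of $G$ is quiet.

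Since all options of $G$ are quiet, in particular $\infty\notin\GL$, so $G$ is not a Left-check; hence the second clause of Definition~\ref{def:prot} applies, and $P_G=\{*n : G^{\oset{\rightarrow}{L}}+*n\in\mathscr{L},\ G^{\oset{\rightarrow}{L}}\in\GL\}$. But a Left-check $G^{\oset{\rightarrow}{L}}$ would be a non-quiet option of $G$ (it has $\infty$ among its Right options, so it is a Right-check, hence not quiet), contradicting the hypothesis. Therefore $G$ has no option of the form $G^{\oset{\rightarrow}{L}}$, so the defining set for $P_G$ is empty and $P_G=\varnothing$. Consequently $S_G\cup P_G = S_G = \GL\cap\Im \subseteq \GL$, which is a finite set (we work with short games), and in particular $S_G\cup P_G\neq\Im$ since $\Im$ is infinite.

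Now Theorem~\ref{gmr} gives $G=*\bigl(\mex(\mathcal{G}(S_G\cup P_G))\bigr)$, so $G$ is a nimber, contradicting our assumption. Hence every $G\in\Imi$ all of whose options are quiet must be a nimber. The only subtlety here — and the one point worth being careful about — is the justification that $P_G=\varnothing$: one must observe that a Left-check option is by definition not quiet (Definition~\ref{def:quiet}, since it is a Right-check: it contains $\oinf$ among its Right options), so the hypothesis ``all options quiet'' really does exclude all the $G^{\oset{\rightarrow}{L}}$ that could contribute to $P_G$. Once that is in place the rest is immediate from Theorem~\ref{gmr}.
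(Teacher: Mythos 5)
Your proof is correct and follows essentially the same route as the paper: all options quiet forces $P_G=\varnothing$, hence $S_G\cup P_G=S_G$ is finite and cannot equal $\Im$, and the second bullet of Theorem~\ref{gmr} gives that $G$ is a nimber (the contradiction framing via Theorem~\ref{gsg} is harmless but unnecessary). One small slip in the step you flag as delicate: a Left-check is a game with $\infty$ among its \emph{Left} options, and it need not be a Right-check (e.g.\ $\cg{\inf}{0}$), so your parenthetical justification misquotes the definitions; the fact you actually need --- that a Left-check option is not quiet --- follows directly from Definition~\ref{def:quiet}, which explicitly excludes Left-checks from being quiet.
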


\begin{proof}
\noindent
If all the options of a game $G\in\Imi$ are quiet, then $\prot{G}=\varnothing$. Therefore, $\imm{G}\cup \prot{G}=\imm{G}\neq\Im$ and, by Theorem~\ref{gmr}, $G=*\left(\mex(\SG(\imm{G}))\right)$.
\end{proof}

\section{Case study: {\sc nimstring}}\label{sec:nimstring}

In the introduction, we promised to show that the following component equals $*2$.

\begin{center}
\includegraphics{Figure6.pdf}
\end{center}
Study the positions:\\

\begin{center}
\includegraphics{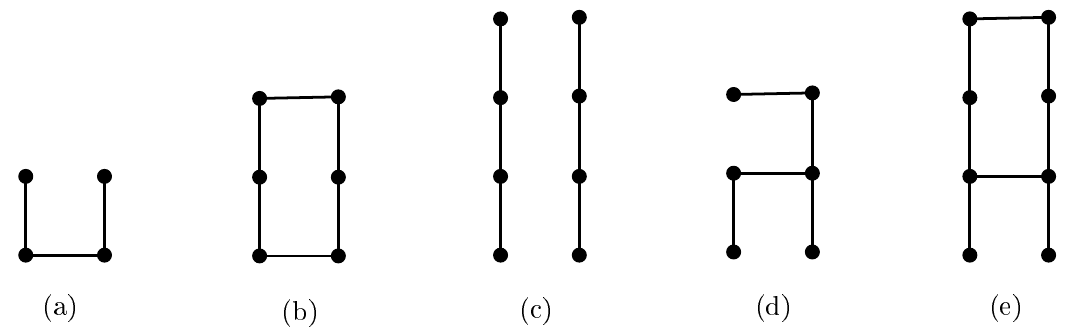}
\end{center}

All (a), (b), (c), (d), and (e) are $\mathcal{P}$-positions. The game value of (f) is $\moon=\moonform$.
\vspace{-1.2cm}
\begin{center}
\includegraphics{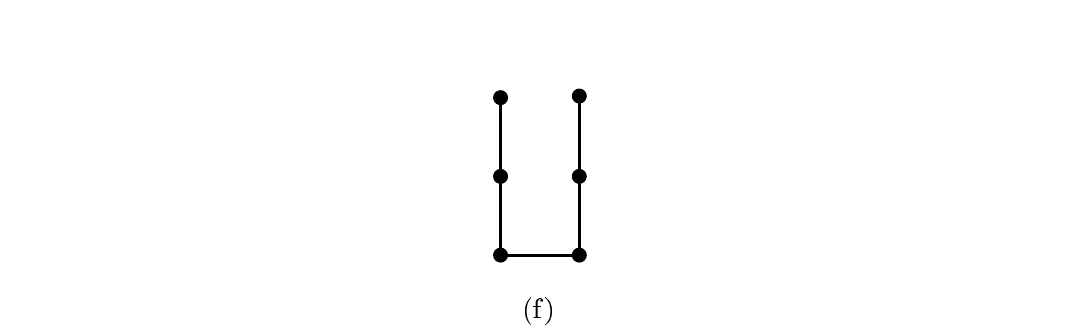}
\end{center}


Other positions that equal  \moon\ are the following.\\

\begin{center}
\includegraphics{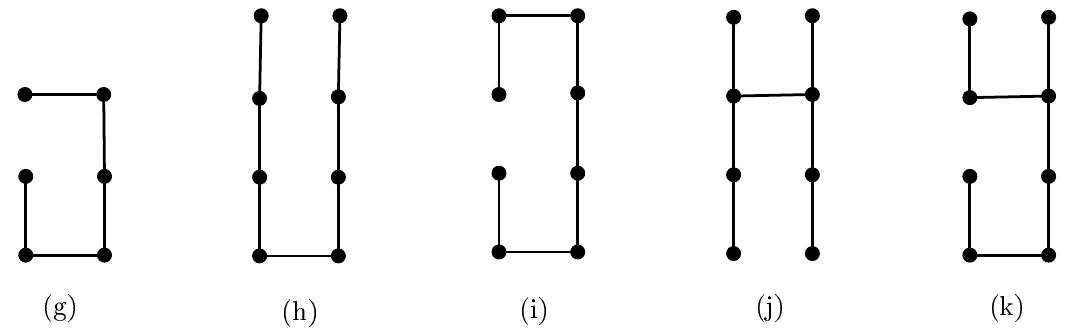}
\end{center}

In the position (l), the central horizontal move is the option (d), that is equal to $0$. The other options are (f) and (g), that are equal to \moon. Therefore, the literal form is $$l=\cg{0,\moon,\moon}{0,\moon,\moon}$$ with $S_l=\{0\}$, and $P_l=\varnothing$. Applying the affine impartial minimum excluded rule, we conclude that the position is $*$.

\vspace{-1.2cm}
\begin{center}
\includegraphics{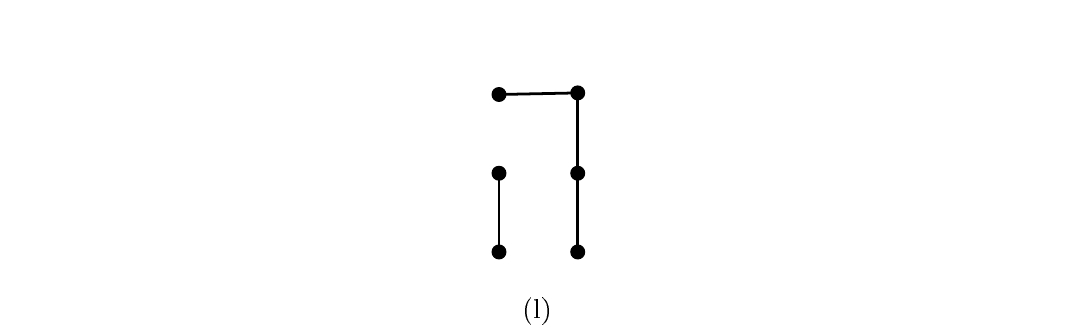}
\end{center}

The position (m) is also equal to $*$, i.e. $0+*$.

\begin{center}
\includegraphics{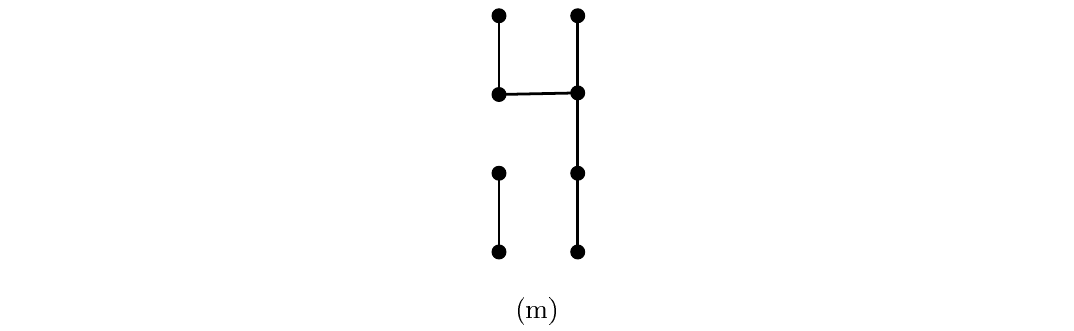}
\end{center}

Now, we are ready for (n), a more complex situation. The literal form is $$n=\{h,i,k,\{\infty\,|\,l\}\,|\,h,i,k,\{l\,|\,\overline{\infty}\}\},$$ that is, $$n=\{\leftmoon,\leftmoon,\leftmoon,\{\infty\,|\,*\}\,|\,\leftmoon,\leftmoon,\leftmoon,\{*\,|\,\overline{\infty}\}\}.$$ Hence, $S_n=\varnothing$, and $P_n=\Im\setminus\{*\}$. Applying the affine impartial minimum excluded rule, we conclude that the position is $*$.\\\\

\begin{center}
\includegraphics{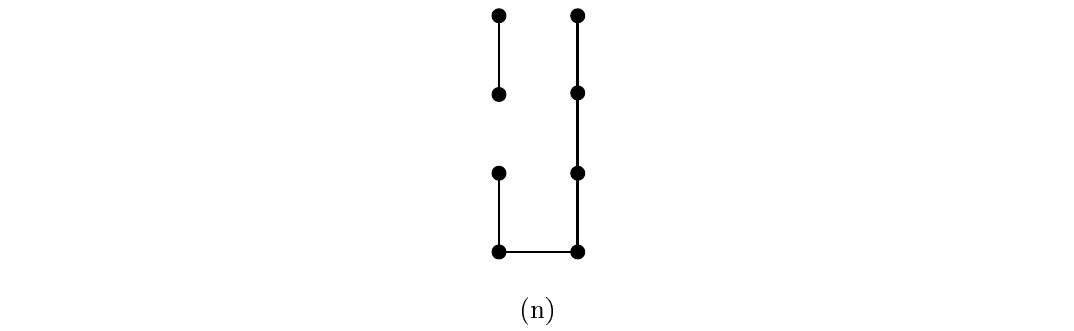}
\end{center}

Going back to the original question, we have the following.\\

\begin{center}
\includegraphics{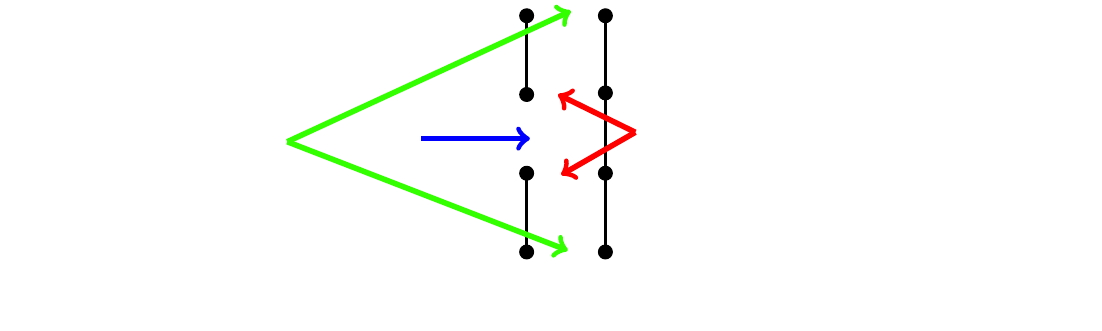}
\end{center}
The form is $\{\textbf{\textcolor[rgb]{0.00,1.00,0.00}{n}},\textbf{\textcolor[rgb]{1.00,0.00,0.00}{m}},\textbf{\textcolor[rgb]{0.00,0.07,1.00}{c}}\,|\,\textbf{\textcolor[rgb]{0.00,1.00,0.00}{n}},\textbf{\textcolor[rgb]{1.00,0.00,0.00}{m}},\textbf{\textcolor[rgb]{0.00,0.07,1.00}{c}}\}$, that is, $\{*,*,0\,|\,*,*,0\}=*2$. Here {\bf n} represents a play of the top or bottom bar, {\bf m} represents a play of some middle bar, and {\bf c} represents play of the left line. 


\section{Case study: {\sc top entails}}\label{sec:topentail}


We denote by $\boldsymbol{n}$ the literal form of a stack of size $n$. The literal form of the Left removal of the top coin from a stack of size $n$ is  $\{\infty\,|\,(\boldsymbol{n-1})^\mathcal{R}\}$ (and the symmetric from Right's point of view). With that in mind, let us compute the first few values.  First, we do it the tedious way, and then later after Theorem~\ref{th:slick}, we propose the slick recursive way for a few more values, in a table format.

Of course, $\boldsymbol{0}=\{\overline{\infty}\,|\,\infty\}$. The first player loses. Moreover,

\begin{itemize}
\item[]
$\boldsymbol{1}=\{\{\infty\,|\,\boldsymbol{0}^\mathcal{R}\}\,|\,\{\boldsymbol{0}^\mathcal{L}\,|\,\overline{\infty}\}\}=\{\{\infty\,|\,\infty\}\,|\,\{\overline{\infty}\,|\,\overline{\infty}\}\}$. Therefore, $S_{\boldsymbol{1}}=\varnothing$ and $P_{\boldsymbol{1}}=\Im$. Using the affine impartial minimum excluded rule, $\boldsymbol{1}=\moon$. In the next step, for ease, we will use the form $\moon=\pm\infty$. 

\item[]
$\boldsymbol{2}=\{\boldsymbol{1}+\boldsymbol{1}, \{\infty\,|\,\boldsymbol{1}^\mathcal{R}\}\,|\,\boldsymbol{1}+\boldsymbol{1}, \{\boldsymbol{1}^\mathcal{L}\,|\,\overline{\infty}\}\}=\{\moon,\{\infty\,|\,\overline{\infty}\}\,|\,\moon,\{\infty\,|\,\overline{\infty}\}\}$. Therefore, $S_{\boldsymbol{2}}=\varnothing$ and $P_{\boldsymbol{2}}=\varnothing$. Using the affine impartial minimum excluded rule, $\boldsymbol{2}=0$.

\item[]
 $\boldsymbol{3}=\{\boldsymbol{1}+\boldsymbol{2},\{\infty\,|\,\boldsymbol{2}^\mathcal{R}\}\,|\,\boldsymbol{1}+\boldsymbol{2},\{\boldsymbol{2}^\mathcal{L}\,|\,\overline{\infty}\}\}$. This game is equal to \linebreak $\{\moon,\{\infty\,|\,\moon,\{\infty\,|\,\overline{\infty}\}\}\,|\,\moon,\{\moon,\{\infty\,|\,\overline{\infty}\}\,|\,\overline{\infty}\}\}$. Therefore, $S_{\boldsymbol{3}}=\varnothing$ and $P_{\boldsymbol{3}}=\Im$. Using the affine impartial minimum excluded rule, $\boldsymbol{3}=\moon$. In the next step, for ease, we will use the form $\moon=\pm\infty$.

\item[]
 $\boldsymbol{4}=\{\boldsymbol{1}+\boldsymbol{3},\boldsymbol{2}+\boldsymbol{2},\{\infty\,|\,\boldsymbol{3}^\mathcal{R}\}\,|\,\boldsymbol{1}+\boldsymbol{3},\boldsymbol{2}+\boldsymbol{2},\{\boldsymbol{3}^\mathcal{L}\,|\,\overline{\infty}\}\}$. This game is equal to\linebreak$\{\moon,0,\{\infty\,|\,\overline{\infty}\}\,|\,\moon,0,\{\infty\,|\,\overline{\infty}\}\}$. Therefore, $S_{\boldsymbol{4}}=\{0\}$ and $P_{\boldsymbol{4}}=\varnothing$. Using the affine impartial minimum excluded rule, $\boldsymbol{4}=*$.

\item[]
$\boldsymbol{5}=\{\boldsymbol{1}+\boldsymbol{4},\boldsymbol{2}+\boldsymbol{3},\{\infty\,|\,\boldsymbol{4}^\mathcal{R}\}\,|\,\boldsymbol{1}+\boldsymbol{4},\boldsymbol{2}+\boldsymbol{3},\{\boldsymbol{4}^\mathcal{L}\,|\,\overline{\infty}\}\}$. This game is equal to $\{\moon,\moon,\{\infty\,|\,0\}\,|\,\moon,\moon,\{0\,|\,\overline{\infty}\}\}$. Therefore, $S_{\boldsymbol{5}}=\varnothing$ and $P_{\boldsymbol{5}}=\Im\setminus\{0\}$. Using the affine impartial minimum excluded rule, $\boldsymbol{5}=0$.

\item[]
$\boldsymbol{6}=\{\boldsymbol{1}+\boldsymbol{5},\boldsymbol{2}+\boldsymbol{4},\boldsymbol{3}+\boldsymbol{3},\{\infty\,|\,\boldsymbol{5}^\mathcal{R}\}\,|\,\boldsymbol{1}+\boldsymbol{5},\boldsymbol{2}+\boldsymbol{4},\boldsymbol{3}+\boldsymbol{3},\{\boldsymbol{5}^\mathcal{L}\,|\,\overline{\infty}\}\}$. This game is equal to $\{\moon,*,\moon,\{\infty\,|\,\{0\,|\,\overline{\infty}\}\}\,|\,\moon,*,\moon,\{\{\infty\,|\,0\}\,|\,\overline{\infty}\}\}$. Therefore, $S_{\boldsymbol{6}}=\{*\}$ and $P_{\boldsymbol{6}}=\{0\}$. Using the affine impartial minimum excluded rule, $\boldsymbol{6}=*2$.

\item[]
$\boldsymbol{7}=\{\boldsymbol{1}+\boldsymbol{6},\boldsymbol{2}+\boldsymbol{5},\boldsymbol{3}+\boldsymbol{4},\{\infty\,|\,\boldsymbol{6}^\mathcal{R}\}\,|\,\boldsymbol{1}+\boldsymbol{6},\boldsymbol{2}+\boldsymbol{5},\boldsymbol{3}+\boldsymbol{4},\{\boldsymbol{6}^\mathcal{L}\,|\,\overline{\infty}\}\}$. This game is equal to $\{\moon,0, \moon,\{\infty\,|\,*,\{\{\infty\,|\,0\}\,|\,\overline{\infty}\}\}\,|\,\moon,0,\moon,\{*,\{\infty\,|\,\{0\,|\,\overline{\infty}\}\}\,|\,\overline{\infty}\}\}$. So, $S_{\boldsymbol{7}}=\{0\}$, $P_{\boldsymbol{6}}=\Im\setminus\{0,*\}$, and with the affine impartial minimum excluded rule, $\boldsymbol{7}=*$.
\end{itemize}

Consider a stack of size $n$. We claim that an entailing move by Left does not protect her against an element in $S_{\boldsymbol{n-1}}$. To see this, let $*m\in S_{\boldsymbol{n-1}}$. Moving in $\boldsymbol{n}+*m$, if Left chooses $\{\infty\,|\,(\boldsymbol{n-1})\mathcal{^R}\}+*m$, Right answers $*m+*m$ and wins. On the other hand,  we observe that an entailing  move by Left does not protect her against the elements of $P_{\boldsymbol{n-1}}$. To see this, let $*m$ be an element of $P_{\boldsymbol{n-1}}$. Moving in $\boldsymbol{n}+*m$, if Left chooses $\{\infty\,|\,(\boldsymbol{n-1})\mathcal{^R}\}+*m$, because in $\boldsymbol{n-1}$, Right is protected against $*m$, he has an entailing winning move in the first component. Therefore, we have the general recursion
$$P_{\boldsymbol{n}}=\Im\setminus(S_{\boldsymbol{n-1}}\cup P_{\boldsymbol{n-1}}).$$
The set $S_{\boldsymbol n}$ is composed of the values of the positions of the form $\boldsymbol{\ell}+\boldsymbol m$, $\ell+m=n$, $\ell,m>0$, and disregarding any sum where \moon\ appears. Hence, the recurrence of {\sc top~entails} is as follows.

\begin{theorem}\label{th:slick}
The sets $P_0=S_0=\varnothing$, and for all $n>0$
$P_{\boldsymbol{n}}=\Im\setminus(S_{\boldsymbol{n-1}}\cup P_{\boldsymbol{n-1}}),$
$S_{\boldsymbol n}=\{\mathcal G(\boldsymbol{\ell}+\boldsymbol m), \boldsymbol{\ell},\boldsymbol m\ne\moon\}$.
\end{theorem}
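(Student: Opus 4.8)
The plan is to establish the two recursions of Theorem~\ref{th:slick} separately, as both are essentially already argued in the paragraph preceding the statement; what remains is to organize the reasoning cleanly and to pin down the base case. First I would record the base case: $\boldsymbol 0=\cg{\oinf}{\inf}$ has no options other than the infinities, so $\GL$ and $\GR$ contain no nimbers and no checks, whence $S_0=P_0=\varnothing$ directly from Definitions~\ref{def:imm} and \ref{def:prot} (item 2, with the defining set empty).

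For the formula $P_{\boldsymbol n}=\Im\setminus(S_{\boldsymbol{n-1}}\cup P_{\boldsymbol{n-1}})$ when $n>0$, I would argue $\subseteq$ and $\supseteq$. Since $\boldsymbol n$ is quiet (its only check-type Left option is $\{\infty\,|\,(\boldsymbol{n-1})^\mathcal{R}\}$, which is not equal to $\pm\infty$ unless $\boldsymbol{n-1}=\boldsymbol 0$, and in any case $\infty\notin(\boldsymbol n)^\mathcal{L}$), we use item~2 of Definition~\ref{def:prot}: $*m\in P_{\boldsymbol n}$ iff $\{\infty\,|\,(\boldsymbol{n-1})^\mathcal{R}\}+*m\in\mathscr L$, i.e.\ iff Right moving first loses. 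Right's options there are $(\boldsymbol{n-1})^\mathcal{R}+*m$ and $\{\infty\,|\,(\boldsymbol{n-1})^\mathcal{R}\}+*$-moves inside $*m$; the latter leaves $\{\infty\,|\,(\boldsymbol{n-1})^\mathcal{R}\}+*m'$, still a Left-check, which Left wins by playing $\infty$. So Right loses iff every $(\boldsymbol{n-1})^\mathcal{R}+*m$ is a Left win or Next-player win from Left's side — precisely the condition that $\boldsymbol{n-1}+*m\notin\mathscr P$. By Theorem~\ref{nimbers}, $\boldsymbol{n-1}+*m\in\mathscr P$ iff $\boldsymbol{n-1}=_{\Imi}*m$, i.e.\ iff $*m$ is the nimber value of $\boldsymbol{n-1}$; and $S_{\boldsymbol{n-1}}\cup P_{\boldsymbol{n-1}}$ is, by Theorem~\ref{gmr}, exactly $\Im$ if $\boldsymbol{n-1}=\moon$ and $\{*0,\dots,*(v-1)\}$ where $\boldsymbol{n-1}=*v$ otherwise — in the $\moon$ case $P_{\boldsymbol n}=\varnothing=\Im\setminus\Im$, and in the nimber case $P_{\boldsymbol n}$ is the complement of an initial segment, which matches the right-hand side after checking that $*v$ itself is excluded (Right answers the entailing move by $(\boldsymbol{n-1})^\mathcal R+*v$, a $\mathscr P$-position move). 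This case split is the one genuinely fiddly point, and is where I expect the writing to require the most care.

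For $S_{\boldsymbol n}$, I would invoke Definition~\ref{def:imm}: $S_{\boldsymbol n}=(\boldsymbol n)^\mathcal{L}\cap\Im$, and the non-check Left options of $\boldsymbol n$ are exactly the splitting moves $\boldsymbol\ell+\boldsymbol m$ with $\ell+m=n$, $\ell,m>0$. Each such option already equals its game value; it contributes a nimber to $S_{\boldsymbol n}$ precisely when $\boldsymbol\ell+\boldsymbol m$ is a nimber, and by Theorem~\ref{gsg} together with the absorbing property (Theorem~\ref{absorbing}), $\boldsymbol\ell+\boldsymbol m$ is a nimber iff neither summand is $\moon$ (if one is $\moon$ the sum is $\moon$; if both are nimbers the sum is a nimber). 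Hence $S_{\boldsymbol n}=\{\,\mathcal G(\boldsymbol\ell+\boldsymbol m):\ \ell+m=n,\ \ell,m>0,\ \boldsymbol\ell\ne\moon,\ \boldsymbol m\ne\moon\,\}$, which is the stated set (the Sprague--Grundy value of a sum of nimbers being their nim-sum). The only subtlety is confirming that $\moon$-summands are silently dropped rather than causing $S_{\boldsymbol n}$ to be undefined or all of $\Im$, and that is exactly what Theorem~\ref{gmr} licenses: a splitting move to $\moon$ is simply not an immediate nimber.

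Putting the three pieces together gives the theorem; the one real obstacle is the bookkeeping in the $P_{\boldsymbol n}$ argument around whether the value $*v$ of $\boldsymbol{n-1}$ lands in $S_{\boldsymbol{n-1}}$ or must be added from the entailing move, and handling the $\boldsymbol{n-1}=\moon$ case uniformly — everything else is a direct unwinding of the definitions of $S$ and $P$ against Theorems~\ref{nimbers}, \ref{gsg}, and \ref{gmr}.
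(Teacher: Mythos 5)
Your base case and your treatment of $S_{\boldsymbol n}$ are fine and agree with the paper (whose ``proof'' is just the paragraph preceding the statement). The genuine gap is in your analysis of $P_{\boldsymbol n}$: the key biconditional is inverted. Right, answering the check, loses $\{\infty\,|\,(\boldsymbol{n-1})^{\mathcal R}\}+*m$ if and only if \emph{no} Right option $(\boldsymbol{n-1})^R$ makes $(\boldsymbol{n-1})^R+*m$ a $\mathscr{P}$-position. Since $\boldsymbol{n-1}+*m\in\mathscr{P}$ means \emph{every} first-player move loses (including moves inside $*m$), membership in $\mathscr{P}$ \emph{implies} that Right loses the check; so your claimed equivalence ``Right loses iff $\boldsymbol{n-1}+*m\notin\mathscr{P}$'' is exactly backwards, and would yield $P_{\boldsymbol n}=\Im\setminus\{*v\}$ where $*v$ is the value of $\boldsymbol{n-1}$. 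The paper's own table refutes this: $\boldsymbol{5}=0$ yet $P_{\boldsymbol 6}=\{0\}$, not $\Im\setminus\{0\}$; and $\boldsymbol{4}=*$ with $*\in P_{\boldsymbol 5}=\Im\setminus\{0\}$, so the value of $\boldsymbol{n-1}$ is \emph{included} in $P_{\boldsymbol n}$, not excluded as you assert in your parenthetical about ``$*v$ itself is excluded.''

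Your attempted repair, the claim that $S_{\boldsymbol{n-1}}\cup P_{\boldsymbol{n-1}}=\{*0,\dots,*(v-1)\}$ is an initial segment, is also false: the mex rule only guarantees $\{0,\dots,v-1\}\subseteq\mathcal{G}(S_{\boldsymbol{n-1}}\cup P_{\boldsymbol{n-1}})$, and the table gives, e.g., $S_{\boldsymbol 7}\cup P_{\boldsymbol 7}=\Im\setminus\{*\}$ while $\boldsymbol 7=*$. The correct argument, which is the one sketched in the paper, classifies the Right options of $\boldsymbol{n-1}$ that can answer $*m$ with a $\mathscr{P}$-position: a quiet option does so iff it equals $*m$, i.e.\ $*m\in S_{\boldsymbol{n-1}}$ (TweedleDee--TweedleDum); Right's own entailing option does so iff Right is protected against $*m$ in $\boldsymbol{n-1}$, i.e.\ $*m\in P_{\boldsymbol{n-1}}$ by the symmetric reading of Definition~\ref{def:prot}; and a quiet non-nimber option never does, by Theorem~\ref{nimbers}. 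Hence $*m\in P_{\boldsymbol n}$ iff $*m\notin S_{\boldsymbol{n-1}}\cup P_{\boldsymbol{n-1}}$, with no case split on whether $\boldsymbol{n-1}$ is a nimber or $\moon$ needed at all.
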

\begin{proof}
This is explained in the above paragraph.
\end{proof}

Now, we can fill a table in an easy way.

\begin{center}
\begin{tabular}{|c|c|c|c|c|}
  \hline
  $n$ & $S_{\boldsymbol{n}}$ & $P_{\boldsymbol{n}}$ & $S_{\boldsymbol{n}}\cup P_{\boldsymbol{n}}$& $\mathcal G$-value (mex rule) \\
    \hline
 $0$ & $\varnothing$ & $\varnothing$ & $\varnothing$ & $0$ \\
  \hline
 $1$ & $\varnothing$ & $\Im$ & $\Im$ & $\infty$ \\
   \hline
  $2$ &  $\varnothing$ &  $\varnothing$ &  $\varnothing$ & $0$ \\
    \hline
      $3$ &  $\varnothing$ &  $\Im$ &  $\Im$ & $\infty$ \\
    \hline
          $4$ &  $\{0\}$ &  $\varnothing$ &  $\{0\}$ & $1$ \\
    \hline
              $5$ &  $\varnothing$ &  $\Im\backslash\{0\}$  &  $\Im\backslash\{0\}$ & $0$ \\
    \hline
                  $6$ &  $\{*\}$ &  $\{0\}$  &  $\{0,*\}$ & $2$ \\
    \hline
                      $7$ &  $\{0\}$ &  $\Im\backslash\{0,*\}$  &  $\Im\backslash\{*\}$ & $1$ \\
    \hline
                          $8$ &  $\{0,*2\}$ &  $\{*\}$  &  $\{0,*,*2\}$ & $3$ \\
    \hline
                              $9$ &  $\{*\}$ &  $\Im\backslash\{0,*,*2\}$  &  $\Im\backslash\{0,*2\}$ & $0$ \\
    \hline
                                  $10$ &  $\{0,*3\}$ &  $\{0,*2\}$  &  $\{0,*2,*3\}$ & $1$ \\
    \hline
                                      $11$ &  $\{0,*2\}$ &  $\Im\backslash\{0,*2,*3\}$  &  $\Im\backslash\{*3\}$ & $3$ \\
    \hline
                                          $12$ &  $\{0,*,*2\}$ &  $\{*3\}$  &  $\{0,*,*2,*3\}$ & $4$ \\
    \hline
\end{tabular}
\end{center}

With the recursion, we know that $\boldsymbol{n}=\moon$ if and only if $S_{\boldsymbol{n-1}}\cup P_{\boldsymbol{n-1}}\subseteq S_{\boldsymbol{n}}$. That happens for $n=2403$, $n=2505$, and $n=33243$, as mentioned in \cite{West1996}. One of three possibilities must happen: a) A finite number of finite nimbers; b) A finite number of loony values; c) An infinite number of finite nimbers and an infinite number of loony values. However, it is an open problem to know what case happens.\\

At the first Combinatorial Games Workshop at MSRI, John Conway proposed that an effort should be made to devise some game with entailing moves that is non-trivial, but (unlike {\sc top entails}) susceptible to a complete analysis. All attempts which have been tried turn out to be not very interesting. As a sequel to this work, we are finalizing a paper \cite{LNS2} with a proposal of a ruleset to meet Conway's suggestion.

\end{document}